\newcommand{\R}{\mathbb{R}}
\newcommand{\Z}{\mathbb{Z}}
\newcommand{\Nz}{\mathbb{N}_0}
\newcommand{\phelp}{p}
\newcommand{\pinv}{Q}
\newcommand{\pinvhelp}{q}
\newcommand{\pinvequal}{\pi}
\newcommand{\littleO}{o}
\newcommand{\bigO}[1]{\mathcal{O} \left( #1 \right)}
\newtheorem*{rep@theorem}{\rep@title}
\newcommand{\newreptheorem}[2]{%
\newenvironment{rep#1}[1]{%
 \def\rep@title{#2 \ref{##1}}%
 \begin{rep@theorem}}%
 {\end{rep@theorem}}}
\newtheorem{theorem}{Theorem}
\newtheorem{lemma}[theorem]{Lemma}
\newtheorem{proposition}[theorem]{Proposition}
\newtheorem{corollary}[theorem]{Corollary}
\newcommand{\ignore}[1]{}
\newcommand{\secref}[1]{\textbf{Section #1}}
\title{Sets with High Volume and Low Perimeter}
\author{Patrick Devlin -- PatrickDevlin21@gmail.com}
\begin{document}
\maketitle
\ignore{\begin{abstract}
In this paper, we explore a certain variation of the so-called ``isoperimetric problem" in which integer subsets take the role of geometric figures.  BLAH
\end{abstract}}
\section{Introduction}\label{section introduction}
One of the most widely-known classical geometry problems is the so-called \textit{isoperimetric problem}, one equivalent variation of which is:
\begin{quote}
If a figure in the plane has area $A$, what is the smallest possible value for its perimeter?
\end{quote}
In the Euclidean plane, the optimal configuration is a circle, implying that any figure with area $A$ has perimeter at least $2 \sqrt{A\pi}$, and this lower bound may be obtained if and only if the figure is a circle.
\paragraph*{}In 2011, Miller et al.~\cite{Miller} extended the isoperimetric problem in a new direction, in which integer subsets took the role of geometric figures.  For any integer subset $A$, they defined its \textit{volume} as the sum over all its elements, and they defined its \textit{perimeter} as the sum of all elements $x \in A$ such that $\{x-1, x+1\} \not \subset A$.  Thus, the volume can be thought of as the sum of all the elements of $A$, and the perimeter can be thought of as the sum of all the elements on the ``boundary" of $A$ (that is to say, the elements of $A$ whose successor and predecessor are not both in $A$).
\paragraph*{}The main focus of \cite{Miller} was to examine the relationships between a set's perimeter and its volume.  More specifically, the authors wanted to answer the corresponding ``isoperimetric question"\footnote{They focused on this question in particular because it turns out that all of the related extremal questions are trivial.}:
\begin{quote}
If a subset of $\{0, 1, \ldots \}$ has volume $n$, what is the smallest possible value for its perimeter?
\end{quote}
Adopting their notation, we will let $P(n)$ denote this value through the duration of this paper.
\paragraph*{}Because their work is so recent, Miller et al. are apparently the only ones who have published on this variation of the isoperimetric problem or on the function $P(n)$.  Their work was to provide bounds for $P(n)$, by which they were able to determine its asymptotic behavior.  Specifically, their main result was
\begin{theorem}\label{theirResult}
\emph{(Miller et al., 2011)} Let $P(n)$ be as defined.  Then $P(n) \thicksim \sqrt{2}n^{1/2}$.  Moreover, for all $n \geq 1$,
\begin{equation}\label{theirBounds}
\sqrt{2}n^{1/2} - 1/2 < P(n) < \sqrt{2} n^{1/2} + (2 n^{1/4} + 8) \log _2 \log _2 n + 58.
\end{equation}
\end{theorem}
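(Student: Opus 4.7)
For the lower bound, let $A \subseteq \Nz$ have volume $n$ and set $M = \max A$.  Since $M + 1 \notin A$, the element $M$ lies on the boundary of $A$, so the perimeter of $A$ is at least $M$.  On the other hand $A \subseteq \{0, 1, \ldots, M\}$ forces $n \leq M(M+1)/2$, whence $M \geq (\sqrt{8n+1} - 1)/2 > \sqrt{2n} - 1/2$, which proves the left inequality.

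For the upper bound I would construct $A$ as an iterated symmetric difference of initial intervals.  Writing $T_K := K(K+1)/2$, start from $n_0 := n$ and recursively let $K_i$ be the smallest integer with $T_{K_i} \geq n_i$ and set $n_{i+1} := T_{K_i} - n_i$.  The minimality of $K_i$ gives $T_{K_i - 1} < n_i$, so $n_{i+1} < T_{K_i} - T_{K_i - 1} = K_i$, which forces $K_{i+1} < K_i$; hence the recursion terminates at some step $t$ with $n_t = 0$, producing the telescoping identity
\[
n = T_{K_0} - T_{K_1} + T_{K_2} - \cdots + (-1)^{t-1} T_{K_{t-1}}.
\]
I then take $A := [0, K_0] \,\triangle\, [0, K_1] \,\triangle\, \cdots \,\triangle\, [0, K_{t-1}]$.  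Since $K_0 > K_1 > \cdots > K_{t-1} \geq 1$, unwinding the symmetric differences expresses $A$ as a disjoint union of $\lceil t/2 \rceil$ intervals of the form $[K_{2i+1} + 1, K_{2i}]$ (together with the leading interval $[0, K_{t-1}]$ when $t$ is odd), and consecutive intervals are separated by at least one missing integer, so they are genuine distinct runs.  Computing the left and right endpoints of these runs then shows the perimeter of $A$ is at most $\sum_{i=0}^{t-1} K_i + \lfloor t/2 \rfloor$, with strict inequality whenever some run happens to be a singleton.

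To close the argument I bound the two pieces.  The estimate $K_i \leq \sqrt{2 n_i} + 1$ combined with $n_{i+1} < K_i$ shows by induction that $K_i = O\bigl(n^{1/2^{i+1}}\bigr)$, so $K_0 \leq \sqrt{2n} + 1$ and the tail $\sum_{i \geq 1} K_i$ is geometrically dominated by $K_1 = O(n^{1/4})$; the same doubly-exponential decay shows the recursion terminates in $t = O(\log_2 \log_2 n)$ steps.  This yields $P(n) \leq \sqrt{2n} + O(n^{1/4}) + O(\log_2 \log_2 n)$, comfortably inside the stated upper bound.  The main obstacle I anticipate is the careful bookkeeping needed to extract the explicit constants $2$, $8$, and $58$: each of the $O(\log_2 \log_2 n)$ levels of the recursion contributes an additive $O(1)$ error that must be summed precisely, and the small-$n$ regime (where the doubly-exponential tail bound and the geometric series both degrade) will need a separate direct check.
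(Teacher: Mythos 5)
Your lower bound argument is exactly the paper's (Lemma~\ref{basicInequality} together with Proposition~\ref{lowerBoundForP}): the maximum element $m$ of $A$ lies on the boundary, so $per(A)\geq m$, while $A\subseteq\{0,\dots,m\}$ gives $n\leq m(m+1)/2$ and hence $m>\sqrt{2n}-1/2$. For the upper bound, note that the paper itself never reproduces Miller et al.'s argument --- it cites the result and instead later derives a strictly sharper bound (Theorem~\ref{myBounds}) analytically via the recurrences $P(n)\leq f(n)+Q(g(n))$ and $Q(n)\leq 1+f(n)+P(g(n))$. Your iterated symmetric-difference construction is the concrete, set-level realization of those same recurrences: your $K_0$ is the paper's $f(n)$, your $n_1$ is $g(n)$, and carving $[0,K_1]$ out of $[0,K_0]$ is precisely the ``complement swap'' that turns a minimum-$per(A)$ subproblem into a minimum-$per(A^c)$ subproblem. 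The construction is sound --- the runs really are separated by gaps because the $K_i$ are strictly decreasing, and the perimeter estimate $\sum_{i<t} K_i + \lfloor t/2\rfloor$ checks out --- so this is a legitimate and more elementary route to the $\sqrt{2n}+O(n^{1/4}\log_2\log_2 n)$ shape.

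The genuine gap is the one you flag yourself: the theorem asserts a specific inequality with hard constants ($2$, $8$, $58$), valid for every $n\geq 1$, and your argument only delivers $P(n)\leq\sqrt{2n}+O(n^{1/4})+O(\log_2\log_2 n)$ with unquantified constants. To close it you would need to make every step quantitative: a concrete doubly-exponential decay statement (the paper's Proposition~\ref{gBound} gives $g^L(n)\leq 2(n/2)^{1/2^L}$, which is the explicit form your $K_i=O\bigl(n^{1/2^{i+1}}\bigr)$ ought to become); an explicit bound on the depth $t$ in terms of $\log_2\log_2 n$ plus a constant; an actual summation of the per-level errors $K_i\leq\sqrt{2n_i}+1$ rather than ``$O(1)$ per level''; and a separate direct check for small $n$, since $\log_2\log_2 n$ is nonpositive or undefined for $n\leq 4$ while the claimed bound must hold from $n=1$. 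As written, your proof establishes $P(n)\sim\sqrt{2}\,n^{1/2}$ but not the displayed inequality; asserting that the construction lands ``comfortably inside the stated upper bound'' is exactly the step that cannot be left implicit.
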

\paragraph*{}Their proof of the lower bound will be reproduced in following sections.  However, their proof of the upper bound was found by a construction argument, which we will not reproduce here since we will analytically derive a tighter bound in \textbf{Theorem \ref{myBounds}}.
\paragraph*{}Beyond the inequalities in \eqref{theirBounds} provided by Miller et al., nothing else has been published on $P(n)$ except for some values for small $n$.  It should be noted that \cite{Miller} provides very good bounds on a related function, in which the sets of interest are allowed to have both negative as well as positive elements.  However, this result was also obtained by a construction argument, and it is not relevant to this paper.
\subsection*{Outline of Results}
In this paper, we focus on improving the few results known on $P(n)$, including deriving multiple exact formulas and developing an understanding of its interesting long-term behavior.  Many of these results are stated in terms of an intimately related function, $Q(n)$, which may be briefly defined as\footnote{More adequate introductions of this function are given in subsequent sections.}
\[
Q(n) := \min_{A \subseteq \{0, 1, \ldots\}} \Big \{ per(A^{c}) : vol(A) = n \Big \}.
\]
Since it proves to be so closely related to $P(n)$, we also provide results on $Q(n)$ throughout the paper.
\paragraph*{}We begin in \secref{\ref{section preliminary}} by proving several prelimary lemmas including those used in \cite{Miller}.  Then in \secref{\ref{section first recurrences}}, we define auxilary functions, with which we combinatorially derive several recursive formulas for $P(n)$.  We then introduce the function $Q(n)$ and derive similar recursive formulas for it as well.
\paragraph*{}In \secref{\ref{section second recurrences}}, we relate the functions $P(n)$ and $Q(n)$ by providing yet more recurrence relations for both of them, from which we see that each function completely determines the other.  With this in place, we move on to \secref{\ref{section analysis of recurrences}}, in which we use these recurrences to determine several analytic results for $P(n)$ and $Q(n)$, including upper and lower bounds and derivations of their asymptotic behavior.
\paragraph*{}Our work then culminates in \secref{\ref{section good recurrences}}, in which we state and prove the main results of the paper.  By appealing to our analytic bounds on $P(n)$ and $Q(n)$, we show that the recurrences of \secref{\ref{section second recurrences}} admit certain drastic simplifications.  With this, we derive several elegant reccurence relations and quasi-explicit representations for $P(n)$ and $Q(n)$, and we also briefly remark on the intricate fractal-like symmetry of these functions.
\paragraph*{}More specifically, our main result is that for all sufficiently large values of $n$, the functions $P(n)$ and $Q(n)$ admit the following useful and satisfying recurrence relations, where $f(n)$ and $g(n)$ are simple explicitly defined functions related to the distribution of the triangular numbers:
\begin{reptheorem}{bestResult}
Let $P(n)$ and $Q(n)$ be as given.  Then there exists some integer $N$ such that for all $n\geq N$, 
\begin{eqnarray*}
P(n) &=& f(n) + Q(g(n)) \qquad \qquad \text{and}\\
Q(n) &=& 1 + f(n) + P(g(n)),
\end{eqnarray*}
where $f(n)$ and $g(n)$ are as defined.
\end{reptheorem}
\paragraph*{}We conclude in \secref{\ref{section conclusion}} by noting applications in the design of algorithms related to this problem and with some open questions for future research.

\section{Definitions and Notation}\label{section definitions}
For the reader's possible convenience, a list of formal definitions used throughout the paper is given here.  Since each notion is adequately defined as it naturally arises in the paper, we suggest that the reader not burden himself with the details of these definitions at this time and instead use them only for possible reference as he procedes.
\begin{itemize}
\item Let $A$ be an integer subset.  Then we define the \textit{boundary} of $A$ as
\[
\partial A = \{z \in A : \{ z-1, z+1\} \not \subseteq A\}.
\]
In words, this is the set of elements of $A$ whose successor or predecessor is not in $A$.
\item Let $A$ be an integer subset.  Then the \textit{volume} and \textit{perimeter} of $A$ are defined as
\[
vol(A) = \sum_{z \in A} z, \qquad \text{and} \qquad per(A) = \sum_{z \in \partial A} z,
\]
respectively.  These values may be infinite (although they will not be in our considerations).  The volume and perimeter of the empty set is 0.
\item The function $P(n)$ can be formally defined as
\[
P(n) = \min_{A \subseteq \{0, 1, \ldots\}} \Big \{ per(A) : vol(A) = n \Big \}.
\]
\item Let $A \subseteq \{0, 1, \ldots \}$.  Then we define the \textit{complement} of $A$ as
\[
A^{c} = \{0, 1, \ldots \} \setminus A = \{z \in \{0, 1, \ldots \} : z \notin A\}.
\]
\item The function $Q(n)$ can be formally defined as
\[
Q(n) = \min_{A \subseteq \{0, 1, \ldots\}} \Big \{ per(A^{c}) : vol(A) = n \Big \}.
\]
\item The helper functions $p(n;k)$ and $q(n;k)$ can be formally defined as
\[
p(n;k) = \min_{A \subseteq \{0, 1, \ldots , k\}} \Big \{per(A) : vol(A) = n \Big \}, \quad \qquad q(n;k) = \min_{A \subseteq \{0, 1, \ldots , k\}} \Big \{per(A^{c}) : vol(A) = n \Big \}.
\]
\item The special helper function $\pinvequal (n;k)$ can be formally defined as
\[
\pinvequal (n;k) = \min_{A \subseteq \{0, 1, \ldots, k\}} \Big \{per(A^c) : vol(A) = n, \quad \text{and} \quad k \in A \Big \}.
\]
\item For all $n \geq 0$, we define the functions $f(n)$ and $g(n)$ as
\[
f(n) = \left \lceil (\sqrt{8n + 1} - 1)/2 \right \rceil, \qquad \text{and} \qquad g(n) = \dfrac{f(n) [f(n) + 1]}{2} - n,
\]
where $\lceil x \rceil$ denotes the ceiling function.  In \textbf{Proposition \ref{fReps}}, we provide several equivalent representations for the function $f(n)$.
\item For all $n \geq 0$ and for all $N$, we define $\phi (n; N) = \phi (n)$ as the smallest nonnegative integer $i$ such that
\[
g^{i}(n) \leq N,
\]
where $g^{i}(n)$ denotes the $i$-fold composition of $g$ evaluated at $n$.
\end{itemize}

\section{Preliminary Results}\label{section preliminary}
The following lemma is used throughout \cite{Miller} and is essential in proving their lower bound on $P(n)$.
\begin{lemma}\label{basicInequality}
\emph{(Miller et al., 2011)} Assume $A$ is a finite nonempty subset of $\{0, 1, \ldots \}$, and let $m$ denote its maximum element.  Then
\[
m \leq per(A) \leq vol(A) \leq \dfrac{m(m+1)}{2}.
\]
\end{lemma}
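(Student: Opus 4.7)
The plan is to verify the three inequalities one at a time, each of which reduces to a direct comparison of sums of nonnegative integers; none of them requires any real ingenuity, so the main challenge is just being careful about the definitions.

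First I would establish $m \leq per(A)$. Since $m$ is the maximum element of $A$, certainly $m+1 \notin A$, so by definition $m \in \partial A$. Thus $m$ is one of the terms in the sum defining $per(A)$, and since every element of $A$ (and hence of $\partial A$) lies in $\{0, 1, \ldots\}$, every other term is nonnegative, giving $per(A) \geq m$.

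Next I would observe that $\partial A \subseteq A$, again with all terms nonnegative, so
\[
per(A) = \sum_{z \in \partial A} z \;\leq\; \sum_{z \in A} z = vol(A).
\]

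Finally, because $m$ is the maximum element of $A$, we have $A \subseteq \{0, 1, \ldots, m\}$, so
\[
vol(A) = \sum_{z \in A} z \;\leq\; \sum_{z=0}^{m} z = \frac{m(m+1)}{2}.
\]

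The only thing resembling an obstacle is making sure the first step uses the right boundary condition, namely that $m+1 \notin A$ (rather than fussing with whether $m-1 \in A$); once one notes this, the entire lemma is a one-line observation for each inequality. No induction, case analysis, or extremal argument is needed, which is consistent with its role as a basic tool reused throughout the paper.
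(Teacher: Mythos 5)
Your proof is correct and matches the paper's argument essentially verbatim: $m+1\notin A$ forces $m\in\partial A$, giving the first inequality; $\partial A\subseteq A$ with nonnegative terms gives the second; and $A\subseteq\{0,\ldots,m\}$ gives the third. The only difference is the order in which the inequalities are presented, which is immaterial.
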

\begin{proof}
The fact that $per(A) \leq vol(A)$ is clear from the fact that all the elements of $A$ are nonnegative and $\partial A \subseteq A$.
\paragraph*{}Now because $m$ is the maximum element of $A$, we know that $m+1 \notin A$.  Therefore $m \in \partial A$, which means that it is a term in the summation for $per(A)$.  Since all the other terms must be nonnegative, we have $m \leq per(A)$.
\paragraph*{}Finally, since $m$ is the maximum element of $A$, we know $A \subseteq \{0, 1, \ldots , m\}$, implying that
\[
vol(A) \leq 0 + 1 + 2 + \cdots + m = \dfrac{m(m+1)}{2},
\]
which completes the proof.
\end{proof}
Using the previous lemma, the following lower bound is immediately attained.
\begin{proposition}\label{lowerBoundForP}
Assume $A \subseteq \{0, 1, \ldots \}$ is $A$ is finite.  Then we have
\[
\sqrt{2vol(A)} -1/2 \leq \dfrac{-1 + \sqrt{1+8 vol(A)}}{2} \leq per(A).
\]
Moreover, for any positive integer $n$, this implies
\[
\sqrt{2} n^{1/2} - 1/2 \leq P(n).
\]
\end{proposition}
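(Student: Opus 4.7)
The plan is to derive both inequalities as essentially immediate consequences of \textbf{Lemma \ref{basicInequality}}. Let $A\subseteq\{0,1,\ldots\}$ be finite; we may assume $A$ is nonempty since otherwise $vol(A)=0$ and both inequalities hold trivially. Let $m$ be the maximum element of $A$.

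First I would establish the middle inequality $\frac{-1+\sqrt{1+8\,vol(A)}}{2}\le per(A)$. From \textbf{Lemma \ref{basicInequality}} we have both $vol(A)\le m(m+1)/2$ and $m\le per(A)$. The first inequality rearranges to $m^2+m-2\,vol(A)\ge 0$, and applying the quadratic formula together with $m\ge 0$ yields $m\ge \frac{-1+\sqrt{1+8\,vol(A)}}{2}$. Chaining this with $m\le per(A)$ gives exactly the middle inequality.

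Next I would verify the leftmost inequality $\sqrt{2\,vol(A)}-1/2\le \frac{-1+\sqrt{1+8\,vol(A)}}{2}$, which is purely algebraic and independent of the combinatorics. Rearranging, it is equivalent to $2\sqrt{2\,vol(A)}\le \sqrt{1+8\,vol(A)}$, i.e.\ $8\,vol(A)\le 1+8\,vol(A)$, which is obvious.

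For the final claim, fix a positive integer $n$ and let $A\subseteq\{0,1,\ldots\}$ be a finite set with $vol(A)=n$ achieving $per(A)=P(n)$ (such a set exists and is nonempty since $n\ge 1$ forces $A\neq\emptyset$, and the minimum is attained because only finitely many subsets of $\{0,1,\ldots\}$ have volume exactly $n$). Applying the bound just proved gives $P(n)=per(A)\ge \sqrt{2\,vol(A)}-1/2=\sqrt{2}\,n^{1/2}-1/2$, as desired. There is no real obstacle here; the only subtlety worth flagging is the brief justification that the minimum defining $P(n)$ is attained on a finite nonempty set, so that \textbf{Lemma \ref{basicInequality}} applies.
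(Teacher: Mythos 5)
Your proof is correct and takes the same route the paper intends: the paper gives no separate proof for this proposition, presenting it as ``immediately attained'' from \textbf{Lemma \ref{basicInequality}}, which is exactly how you derive it (rearranging $vol(A)\le m(m+1)/2$ via the quadratic formula, chaining with $m\le per(A)$, and checking the purely algebraic left inequality). Your brief remark about attainment of the minimum defining $P(n)$ is a reasonable bit of care, though the paper treats it as obvious.
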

As stated before, except for the previously mentioned construction yielding an upper bound on $P(n)$, the previous two results seem to be all that is currently known about $P(n)$.  The remainder of the paper is devoted to original results.\ignore{The following lemma characterizes when equality is held in the previously mentioned bounds.
\begin{lemma}
Assume $A$ is a finite nonempty subset of $\Nz$, and let $m$ denote its maximum element.  Then unless the set $A$ is equal to $\{m\}$, $\{0, m\}$ or $\{1, 2, \ldots , m\},$ the following are equivalent:\footnote{By carefully stating each of these three cases, one can strengthen this lemma, but the clarity suffers.}
\begin{itemize}
\item[(i)] $A = \{0, 1, 2, \ldots , m\}$
\item[(ii)] $per(A) = m$
\item[(iii)] $vol(A) = m(m+1)/2$
\item[(iv)] $per(A) = \dfrac{-1 + \sqrt{1+8 vol(A)}}{2}$
\end{itemize}
\end{lemma}
\begin{proof}
We will show the chain of implications (i) $\Rightarrow$ (iv) $\Rightarrow$ (ii) $\Rightarrow$ (iii) $\Rightarrow$ (i), which will complete the proof.  First, it is clear that (i) $\Rightarrow$ (iv).  Now assume condition (iv) is true.  Then we have
\[
per(A) = \dfrac{-1 + \sqrt{1+8 vol(A)}}{2}.
\]
From \textbf{Lemma \ref{basicInequality}} we know that $\dfrac{-1 + \sqrt{1+8 vol(A)}}{2} \leq m \leq per(A)$ is always true.  But this implies that $per(A) = m$, which is condition (ii).
\paragraph*{}Now assume condition (ii) is true.  Then $\partial A \subseteq \{0, m\}$, so $A$ is either $\{m\}$, $\{0, m\}$, or $\{0, 1, \ldots , m\}$.  But by assumption, we know that $\{0, 1, \ldots , m\}$ is the only of these sets that $A$ may be.  But from this, it is clear that $vol(A) = 0 + 1 + \cdots + m = m (m+1)/2$, which is condition (iii).
\paragraph*{}Finally, assume condition (iii) is true.  Then since $vol(A) = 1 + 2 + 3 + \cdots + m$, and $A \subseteq \{0, 1, 2, \ldots , m\}$, we have that $\{1, 2, 3, \ldots , m\} \subseteq A$.  But by assumption, equality is not attained in this last statement of containment, which implies $A = \{0, 1, \ldots , m\}$ as desired.
\end{proof}
Using this lemma, it is easy to see that determining $P(n)$ is the only nontrivial extremal value relating the perimeter and area of integer subsets (at least in the case of nonnegative integers).  Moreover, essentially all of the other extremal configurations are characterized.}

\subsection*{Miscellaneous Lemmas}
\begin{lemma}\label{complementInequality}
Assume $A$ is a subset of $\{0, 1, \ldots \}$ such that $per(A)$ is finite.  Then $per(A^c)$ is also finite, and
\[
\dfrac{per(A) - 1}{2} \leq per(A^c) \leq 2 \cdot per(A) + 1.
\]
Moreover, for all $n \geq 1$, we have
\[
\dfrac{P(n)-1}{2} \leq Q(n) \leq 2 \cdot P(n) + 1.
\]
\end{lemma}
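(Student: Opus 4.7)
The plan is to exploit the structural fact that any subset $A$ of $\{0,1,\ldots\}$ with finite perimeter consists of only finitely many maximal blocks of consecutive integers. First, I would observe that $per(A)<\infty$ forces $\partial A$ to contain only finitely many positive integers (since any infinite set of distinct positive integers has infinite sum), and hence $A$ itself is a disjoint union of finitely many maximal intervals $[a_1,b_1],\ldots,[a_k,b_k]$ in $\{0,1,\ldots\}$, with at most one of these (the last) being an unbounded terminal ray. The complement $A^c$ then decomposes in exactly the same way into gap intervals of the form $[0,a_1-1]$, $[b_i+1,a_{i+1}-1]$, and $[b_k+1,\infty)$, so $A^c$ also has only finitely many maximal blocks and $per(A^c)$ is automatically finite.

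Next, I would read off both perimeters directly from this block decomposition: each maximal block $[a_i,b_i]$ of $A$ contributes $a_i+b_i$ to $per(A)$ when $a_i<b_i$ and just $a_i$ when $a_i=b_i$, and likewise each finite maximal block of $A^c$ contributes the sum of its two endpoints (or just the single element, in the singleton case), while the terminal unbounded block of $A^c$ contributes only its lower endpoint. With some careful bookkeeping for singleton blocks and the special behavior at $0$ (where the ``virtual'' predecessor $-1\not\in A$ automatically puts $0$ into $\partial A$ whenever $0\in A$), both $per(A)$ and $per(A^c)$ can be bracketed between $\sum_i b_i$ and $\sum_i(a_i+b_i)$, with these two quantities differing by at most an additive constant of $1$.

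The two inequalities now follow from elementary arithmetic. For the upper bound, bounding each gap's contribution by the sum of its endpoints gives $per(A^c)\leq \sum_i(a_i+b_i)+1\leq 2\,per(A)+1$, where the last step uses that both endpoints $a_i,b_i$ lie in $\partial A$. For the lower bound, each gap contributes at least $b_i+1$, yielding $per(A^c)\geq \sum_i b_i\geq \tfrac{1}{2}\sum_i(a_i+b_i)\geq \tfrac{1}{2}\,per(A)$, which is in fact slightly stronger than required. The case where $A$ itself is infinite (so $A^c$ is finite) requires no separate work: applying the above to $B=A^c$ and using $(A^c)^c=A$ transfers both bounds immediately. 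Finally, the ``moreover'' part for $P$ and $Q$ is immediate from evaluating the set-level inequalities at minimizers: any $A$ with $per(A)=P(n)$ and $vol(A)=n$ gives $Q(n)\leq per(A^c)\leq 2\,P(n)+1$, and analogously in the other direction. The main obstacle is more tedious than deep: the careful handling of singleton intervals and the boundary behavior at $0$ is precisely what produces the additive constants in the final inequalities, so it must be accounted for explicitly but poses no conceptual difficulty.
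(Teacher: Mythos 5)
Your argument is correct, but it takes a more hands-on route than the paper. The paper proves only the upper bound $per(A^c)\leq 2\,per(A)+1$, via the single containment $\partial A^c \subseteq \{z-1,\,z+1 : z\in\partial A\}\setminus\{-1\}$ (every boundary point of $A^c$ is adjacent to a boundary point of $A$), sums over that containment, and then obtains the lower bound for free by exchanging the roles of $A$ and $A^c$; finiteness of $per(A^c)$ falls out of the same containment. You instead decompose $A$ into finitely many maximal blocks $[a_i,b_i]$, read both perimeters off the block and gap endpoints, and prove the two inequalities directly (invoking the $A\leftrightarrow A^c$ symmetry only to dispose of the unbounded-ray case). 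Your version is longer and requires the singleton/endpoint-at-$0$ bookkeeping you acknowledge, but it makes the finiteness of $per(A^c)$ completely transparent and actually yields the slightly sharper lower bound $per(A^c)\geq per(A)/2$ for bounded $A$; the paper's version is a two-line computation. One sentence of yours is garbled as stated: $\sum_i b_i$ and $\sum_i(a_i+b_i)$ differ by $\sum_i a_i$, not by ``an additive constant of $1$.'' This does not damage the proof, since the final derivation in your last paragraph does not use that claim --- the inequalities $per(A^c)\leq\sum_i(a_i+b_i)+1\leq 2\,per(A)+1$ and $per(A^c)\geq\sum_i b_i\geq\tfrac12\sum_i(a_i+b_i)\geq\tfrac12 per(A)$ are each verified directly from the block decomposition --- but you should delete or repair that sentence. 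The reduction of the ``moreover'' statement to the set-level inequalities by evaluating at minimizers matches the paper.
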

\begin{proof}
We will prove that $per(A ^c) \leq 2 \cdot per(A) + 1$, and by switching the roles of $A$ and $A^c$, this will complete the proof of the first inequality.  The second inequality then follows directly from the definitions.
\paragraph*{}Let $A \subseteq \Nz$ be such that $per(A)$ is finite.  Then it follows that $\partial A$ is a finite set.  Now we know that an element $z \in A^c$ is in $\partial A^c$ if and only if $z+1$ or $z-1$ is an element of $\partial A$.  Therefore, we have
\[
\partial A^c \subseteq \{z-1, z+1 : z \in \partial A \} \setminus \{-1\},
\]
where the element $-1$ should not be counted in the event that $0 \in \partial A$.  Therefore, we have that
\[
per(A^c) = \sum_{z \in \partial A^c} z \leq \left( \sum_{z \in \partial A} (z-1) + (z+1) \right ) - (-1) = 2 \sum_{z \in \partial A} z + 1 = 2 per(A) + 1,
\]
which completes the proof.
\end{proof}
This establishes that although $P(n)$ and $Q(n)$ are different, they are still in some sense ``close".
\begin{lemma}
Assume $A$ is a finite nonempty subset of $\{0, 1, \ldots \}$, and let $m$ denote its maximum element.  Then
\[
m+1 \leq per(A^c)
\]
with equality if and only if $\{1, \ldots, m\} \subseteq A$.
\end{lemma}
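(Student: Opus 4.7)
The plan is to exhibit $m+1$ explicitly as a boundary element of $A^c$, from which the inequality follows immediately. Since $m$ is the maximum of $A$ we have $m+1 \in A^c$, and since $m \in A$ we have $m \notin A^c$; so $\{m, m+2\} \not\subseteq A^c$, placing $m+1 \in \partial A^c$. As every term in the sum defining $per(A^c)$ is a nonnegative integer, this gives $per(A^c) \geq m+1$.

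For the ``if'' direction of the equality characterization, the hypothesis $\{1, \ldots, m\} \subseteq A$ together with $m$ being the maximum of $A$ forces $A$ to be either $\{1, \ldots, m\}$ or $\{0, 1, \ldots, m\}$; in either case a direct computation gives $\partial A^c$ equal to $\{0, m+1\}$ or $\{m+1\}$ respectively, and hence $per(A^c) = m+1$.

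For the ``only if'' direction, suppose $per(A^c) = m+1$. Since $m+1 \in \partial A^c$ already accounts for the full value of the sum and every remaining term is a nonnegative integer, every other element of $\partial A^c$ must equal zero; thus $\partial A^c \subseteq \{0, m+1\}$. The main obstacle is to translate this boundary containment into the structural statement $\{1, \ldots, m\} \subseteq A$, which I would handle by a chain argument: assume for contradiction some $z \in A^c$ with $1 \leq z \leq m-1$. Since $z \notin \{0, m+1\}$, we have $z \notin \partial A^c$, forcing $z+1 \in A^c$. Iterating this reasoning shows that $z, z+1, z+2, \ldots$ all lie in $A^c$ as long as the running index stays in $\{1, \ldots, m-1\}$, and one final application pushes $m$ itself into $A^c$, contradicting $m \in A$. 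Hence $\{1, \ldots, m-1\} \subseteq A$, which together with $m \in A$ yields $\{1, \ldots, m\} \subseteq A$ as desired.
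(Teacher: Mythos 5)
Your proof is correct and follows essentially the same route as the paper: exhibit $m+1$ as an element of $\partial A^c$ to get the inequality, then characterize equality by observing that $\partial A^c$ must be $\{m+1\}$ or $\{0,m+1\}$. The paper states the equivalence between that boundary condition and $\{1,\ldots,m\}\subseteq A$ without justification, whereas you fill in both directions explicitly (the case analysis $A\in\{\{1,\ldots,m\},\{0,\ldots,m\}\}$ for ``if'', and the chain argument propagating membership in $A^c$ upward to $m$ for ``only if''); this is a more complete writeup of the same idea.
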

\begin{proof}
Let $A$ be as given.  Then $m \in A$, but we know $m+1 \notin A$.  Therefore, $m+1 \in \partial A^c$ implying that $m+1 \leq per(A^c)$.  Now since $m+1 \in \partial A^c$, we know that $m+1 = per(A^c)$ if and only if $\partial A^c$ is equal to either $\{m+1\}$ or $\{0, m+1\}$.  But this happens if and only if $\{1, 2, \ldots , m\} \subseteq A$, as desired.
\end{proof}
\begin{proposition}\label{lowerBoundForQ}
Assume $A \subseteq \{0, 1, \ldots \}$ is $A$ is finite.  Then we have
\[
\sqrt{2vol(A)} + 1/2 \leq \dfrac{-1 + \sqrt{1+8 vol(A)}}{2} + 1 \leq per(A^c).
\]
Moreover, for any positive integer $n$, this implies
\[
\sqrt{2} n^{1/2} + 1/2 \leq Q(n).
\]
\end{proposition}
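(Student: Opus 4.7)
The proof should be a short assembly of the two preceding lemmas, followed by an elementary algebraic inequality.

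The plan is as follows. First I would let $m$ denote the maximum element of $A$ (so $A$ is nonempty, which may be assumed, since the ``Moreover'' statement concerns positive $n$). By the lemma immediately preceding the proposition, $m+1 \le \mathrm{per}(A^c)$. On the other hand, by \textbf{Lemma \ref{basicInequality}}, we have $\mathrm{vol}(A) \le m(m+1)/2$, and solving this quadratic inequality for $m$ gives
\[
m \ge \frac{-1 + \sqrt{1 + 8\,\mathrm{vol}(A)}}{2}.
\]
Adding $1$ to both sides and chaining with the previous bound yields the second claimed inequality
\[
\frac{-1 + \sqrt{1 + 8\,\mathrm{vol}(A)}}{2} + 1 \le m + 1 \le \mathrm{per}(A^c).
\]

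Next I would verify the first inequality, $\sqrt{2\,\mathrm{vol}(A)} + 1/2 \le \frac{-1 + \sqrt{1 + 8\,\mathrm{vol}(A)}}{2} + 1$, which simplifies to $2\sqrt{2\,\mathrm{vol}(A)} \le \sqrt{1 + 8\,\mathrm{vol}(A)}$. Squaring both sides (both are nonnegative) gives $8\,\mathrm{vol}(A) \le 1 + 8\,\mathrm{vol}(A)$, which is immediate.

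Finally, for the ``Moreover'' statement, I would fix a positive integer $n$ and apply the just-proved bound to an arbitrary finite $A \subseteq \{0, 1, \ldots\}$ with $\mathrm{vol}(A) = n$, obtaining $\mathrm{per}(A^c) \ge \sqrt{2n} + 1/2 = \sqrt{2}\,n^{1/2} + 1/2$. Taking the minimum over all such $A$ gives $Q(n) \ge \sqrt{2}\,n^{1/2} + 1/2$.

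There is no real obstacle here; the only subtle step is noticing that one wants to combine the preceding lemma's lower bound on $\mathrm{per}(A^c)$ in terms of $m$ with \textbf{Lemma \ref{basicInequality}}'s upper bound on $\mathrm{vol}(A)$ in terms of $m$, inverted to a lower bound on $m$. After that, everything is purely algebraic.
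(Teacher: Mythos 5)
Your proposal is correct and matches the paper's intended argument exactly: the paper's proof is the one-line remark ``This follows from the previous lemma in the same way as \textbf{Proposition \ref{lowerBoundForP}},'' and you have simply unpacked that remark by combining $m+1 \le per(A^c)$ with the bound $m \ge (-1+\sqrt{1+8\,vol(A)})/2$ coming from \textbf{Lemma \ref{basicInequality}}. Your observation that $A$ may be taken nonempty is a worthwhile clarification, since the chain actually fails for $A=\emptyset$ and both the preceding lemma and the Moreover clause presuppose nonemptiness.
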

\begin{proof}
This follows from the previous lemma in the same was as \textbf{Proposition \ref{lowerBoundForP}}.
\end{proof}

\section{Recurrence Relations using Auxilary Functions} \label{section first recurrences}
The first set of recurrence relations the author found is detailed below.  Although the relations that are described in the section after this are much simpler, more useful, and elegant, the relations presented here follow naturally, and they motivate the introduction of the auxilary functions needed in the remainder of the paper.  Moreover, because of their convenient structure, these relations are used extensively in the design of algorithms for computing values; this application is briefly discussed in the end of the paper.
\subsection*{First Recurrence for $P(n)$}
As is often the case in analyzing discrete functions, we may obtain an exact recurrence relation for $P(n)$ in terms of some related auxillary function.  We will use this strategy several times in the remainder of the paper.  In our case, recall that $P(n)$ is the minimum perimeter among all subsets of $\{0, 1, \ldots \}$ having volume $n$.  Now observe the following decomposition for $P(n)$
\begin{eqnarray*}
P(n) &=& \min_{A \subseteq \{0, 1, \ldots \}} \Big \{ per(A) : vol(A) = n \Big \}\\
&=& \min \Bigg \{\min_{A \subseteq \{0\}} \Big \{ per(A) : vol(A) = n \Big \}, \min_{A \subseteq \{0, 1\}} \Big \{ per(A) : vol(A) = n \Big \}, \\
& & \qquad \qquad \min_{A \subseteq \{0, 1, 2\}} \Big \{ per(A) : vol(A) = n \Big \},  \min_{A \subseteq \{0, 1, 2, 3\}} \Big \{ per(A) : vol(A) = n \Big \}, \ldots \Bigg \}\\
&=& \min_{k \in \{0, 1, \ldots \}} \Bigg \{ \min_{A \subseteq \{0, 1, \ldots , k\}} \Big \{ per(A) : vol(A) = n \Big \} \Bigg \}.
\end{eqnarray*}
Based on this representation, we define the auxilary function, $\phelp (n;k)$, given by
\[
\phelp (n; k) = \min_{A \subseteq \{0, 1, \ldots , k\}} \Big \{ per(A) : vol(A) = n \Big \}.
\]
With this notation, the previous equation can be written as
\[
P(n) = \min_{k \in \{0, 1, \ldots \}} \Bigg \{ \min_{A \subseteq \{0, 1, \ldots , k\}} \Big \{ per(A) : vol(A) = n \Big \} \Bigg \} = \min_{k \in \{0, 1, \ldots \}} \Bigg \{ \phelp (n;k) \Bigg \}.
\]
\paragraph*{}Now from its definition, it is clear that for all fixed $n$, the function $\phelp (n; k)$ is monotonically decreasing with $k$.  Moreover, for all $K \geq n$, we have $\phelp (n; K) = \phelp (n; n)$ since any subset of $\{0, 1, \ldots \}$ having volume $n$ must necessarily be a subset of $\{0, 1, 2, \ldots , n\}$.  Therefore, with these two facts, the above equation simplifies to
\begin{equation}\label{PRecHelp}
P(n) = \min_{k \in \{0, 1, \ldots \}} \Bigg \{ \phelp (n;k) \Bigg \} = \lim _{k \to \infty} \phelp (n; k) = \phelp (n; n).
\end{equation}
Thus, we now seek to obtain a recurrence for $\phelp (n; k)$, which will provide us with $P(n)$ by calculating $\phelp (n; n)$.
\paragraph*{}Now let $S(n; k)$ denote the set of all subsets of $\{0, 1, \ldots , k \}$ having volume $n$.  Then to obtain our desired recurrence for $\phelp (n;k)$, we will consider the following paritition of the set $S(n;k)$
\begin{eqnarray*}
S(n;k) &=& \{A \in S(n;k) : k \notin A \} \cup \{A \in S(n;k) : k \in A \quad \text{and} \quad k-1 \notin A\}\\
& & \qquad \cup \{A \in S(n;k) : \{k-1, k\} \subseteq A \quad \text{and} \quad k-2 \notin A\} \cup \cdots\\
& & \qquad \cup \{A \in S(n;k) : \{1, 2, \ldots , k\} \subseteq A \quad \text{and} \quad 0 \notin A\}\\
& & \qquad \cup \{A \in S(n;k) : \{0, 1, \ldots, k\} \subseteq A \quad \text{and} \quad -1 \notin A\}\\
&=& \bigcup _{l=0} ^{k+1} \Big \{A \in S(n;k) : \{l, \ldots k\} \subseteq A \quad \text{and} \quad l-1 \notin A \Big \}.
\end{eqnarray*}
From this partition, it follows that
\begin{equation}\label{pHelpRec1}
\phelp (n;k) = \min_{A \in S(n;k)} \Big\{ per(A) \Big \} = \min_{l \in \{0, 1, \ldots k+1\}} \Bigg \{ \min_{A \in S(n;k)} \Big\{ per(A) : \{l, \ldots k\} \subseteq A \quad \text{and} \quad l-1 \notin A \Big \} \Bigg \}.
\end{equation}
\paragraph*{}Now let $0 \leq l \leq k+1$ be fixed.  Then we have
\begin{eqnarray*}
& & \min_{A \in S(n;k)} \Big\{ per(A) : \{l, \ldots k\} \subseteq A \quad \text{and} \quad l-1 \notin A \Big \}\\
& & \qquad = \min_{A \subseteq \{0,1, \ldots, k\}} \Big\{ per(A) : vol(A)=n, \quad \{l, \ldots k\} \subseteq A \quad \text{and} \quad l-1 \notin A \Big \}\\
& & \qquad = \min_{B \subseteq \{0,1, \ldots, l-2\}} \Big\{ per(B \cup \{l, l+1, \ldots, k\}) : vol(B \cup \{l, l+1, \ldots, k\})=n \Big \}\\
& & \qquad = \begin{cases}\displaystyle \min_{B \in \{0,1, \ldots, k-1\}} \Big\{ per(B) : vol(B)=n \Big \} \quad &\text{if $l = k+1$},\\ \displaystyle \min_{B \in \{0,1, \ldots, k-2\}} \Big\{ k + per(B) : vol(B)=n-k \Big \} \quad &\text{if $l = k$},\\ \displaystyle \min_{B \in \{0,1, \ldots, l-2\}} \Big\{k + l + per(B) : vol(B)=n - \left [k(k+1)/2 - l(l-1)/2 \right] \Big \} \quad &\text{if $0 \leq l < k$},\end{cases}\\
& & \qquad = \begin{cases}\phelp(n;k-1) \quad &\text{if $l = k+1$},\\ k+ \phelp (n-k; k-2) \quad &\text{if $l = k$},\\ k + l + \phelp \big (n - [k(k+1)-l(l-1)]/2 ; l-2 \big ) \quad &\text{if $0 \leq l < k$}.\end{cases}
\end{eqnarray*}
Therefore, by substituting into \eqref{pHelpRec1}, we are able to obtain the recurrence
\begin{eqnarray}
\phelp (n;k) &=& \min \Bigg \{ \phelp(n;k-1), k + \phelp(n;k-2),\nonumber\\
& & \qquad \qquad k+\min_{l \in \{0, \ldots, k-1\}} \Big\{l + \phelp \big (n - [k(k+1)-l(l-1)]/2 ; l-2 \big ) \Big \} \Bigg \}\label{pHelpRec},
\end{eqnarray}
which is valid for all $n \geq 1$ and for all $k \geq 1$.  Moreover, as boundary conditions, which are clear from its definition, we have that $\phelp (n;k)$ satisfies
\[
p(n;k) = \begin{cases}0 \qquad &\text{if $n=0$,}\\ \infty \qquad &\text{if $n < 0$ or $k \leq 0 < n$.}\end{cases}
\]
\paragraph*{}Now because $P(n) = \phelp (n;n)$, this recurrence for $\phelp (n;k)$ gives us the following compact recursive representation for $P(n)$ valid for all $n \geq 1$:
\begin{equation}
P(n) = \min \Big \{p(n; n-1), n \Big \},
\end{equation}
with $P(0) = 0$.

\subsection*{Introduction of $\pinv (n)$ and Derivation of First Recurrences}
Because of its intimate connections with the function $P(n)$ that will be explored in subsequent sections, we now introduce the function $\pinv (n)$, which is defined as
\[
\pinv (n) = \min_{A \subseteq \{0, 1, \ldots\}} \Big \{ per(A^c) : vol(A)=n \Big \}.
\]
The difference between this function and the function $P(n)$ is subtle, and based on how similarly the two functions are defined, one would expect their behavior to be very close.  As we will discuss in subsequent sections, this is indeed the case, and the connections between $P(n)$ and $\pinv(n)$ are actually of fundamental importance.  However, it is important for the reader to keep in mind the differences between these two functions throughout the remainder of the paper.
\paragraph*{}As with the function $P(n)$, we define the auxilary function $\pinvhelp (n;k)$ as
\[
\pinvhelp (n;k) = \min_{A \subseteq \{0, 1, \ldots, k\}} \Big \{ per(A^c): vol(A)=n \Big \},
\]
and just as before, for all $n\geq 0$, we have that
\begin{equation}\label{QRecHelp1}
\pinv (n) = \pinvhelp (n;n).
\end{equation}
\paragraph*{}Now because of the differences between the functions $P(n)$ and $\pinv (n)$, at this point, we must define a special auxilary function, $\pinvequal (n;k)$, in order to determine $\pinv (n)$.  This function is given by
\[
\pinvequal (n;k) = \min_{A \subseteq \{0, 1, \ldots, k\}} \Big \{ per(A^c): vol(A)=n \quad \text{and} \quad k \in A \Big \}.
\]
Note the similarities between $\pinvequal (n;k)$ and $\pinvhelp (n;k)$.  In fact, it is easy to see that for all $n \geq 1$ and $k \geq 0$, we have
\begin{equation}\label{qRec}
\pinvhelp (n; k) = \min_{l \in \{1, 2, \ldots , k\}} \Big \{ \pinvequal (n;l) \Big \}.
\end{equation}
Using this equation and \eqref{QRecHelp1}, we obtain that for all $n \geq 1$
\begin{equation}\label{QRecHelp2}
\pinv (n) = \min_{l \in \{1, 2, \ldots , n\}} \Big \{ \pinvequal (n;l) \Big \},
\end{equation}
with $\pinv (0) = 0$.
\paragraph*{}Just as was the case for $P(n)$, in order to obtain a useful recurrence relation for $Q(n)$, it now only remains to find a recurrence for $\pinvequal (n;k)$.  And as before, we accomplish this by a simple partition yielding
\[
\pinvequal (n;k) = k+1 + \min \Big \{\pinvequal (n-k; k-1) - k, k-1 + \pinvhelp (n-k; k-3), \pinvequal (n-k; k-2)  \Big \},
\]
which we obtain by partitioning the subsets of interest into the three groups (I) sets containing $k-1$, (II) sets containing $k-2$ but not $k-1$, and (III) sets containing neither $k-2$ nor $k-1$.

\paragraph*{}At this point, we need to note that some care must be given to the interpretation of the above equation, which depends on how we define $\pinvequal (0;0)$.  However, if we note and state as a boundary condition that $\pinvequal(n,n) = 2n$ for all $n \geq 1$, then these concerns are effectively removed.
\paragraph*{}We then have a recurrence relation for $\pinvequal$.  As boundary conditions for $\pinvequal (n; k)$, we have
\[
\pinvequal (n;k) = \begin{cases}0 \qquad &\text{if $n=k=0$,}\\ 2n \qquad &\text{if $n=k \geq 1$,}\\ \infty \qquad &\text{if $n < 0$ or if $k \in \{0, 1\}$ and $n > k$,}\\ \infty \qquad &\text{if $0 \leq k > n \geq 0$.}\end{cases}
\]
Then for all $n \geq 2$, and $2 \leq k < n$, we have
\[
\pinvequal (n;k) = k+1 + \min \Big \{k-1 + \pinvhelp (n-k; t-3), \pinvequal (n-k; k-2), \pinvequal (n-k; k-1) - k  \Big \}.
\]
\paragraph*{}Thus, by using \eqref{QRecHelp2} we have a recurrence for $Q(n)$ as well.

\section{More Direct Recurrence Relations}\label{section second recurrences}
The following relations were found by making use of different partitions of the sets of interest.  The discovery of the relation for $P(n)$ was what first motivated the author to study the functions $Q(n)$, $\pinvhelp(n;k)$ and $\pinvequal(n;k)$.
\subsection*{Recurrence for $P(n)$ involving $\pinvhelp (n;k)$ and $\pinvequal(n;k)$}
We also have another recurrence that can be used to calculate $P(n)$ ``more directly".  It is found by partitioning all sets of volume $n$ first according to their maximum element, $m$, and then according to the largest integer smaller than $m$ not contained in this set.
\paragraph*{}Let $A$ be a set of volume $n$ and let $m$ be its maximum element.  Then let $l$ be the largest element of $\{-1, 0, \ldots, m\}$ not contained in $A$.  Then $A$ may be written uniquely as $A = \{0, 1, 2, \ldots, m\} \setminus B$ for some set $B \subseteq \{0, 1, \ldots , l\}$, where the volume of $B$ is equal to $(1 + 2 + \cdots + m)-n$ and $l \in B$.  If $l=m-1$, then $per(A) = per(B^c)$.  Else, we have $per(A) = m + per(B^c)$.
\paragraph*{}From this observation, we obtain that for all $n \geq 2$
\begin{equation}\label{PGoodRec}
P(n) = \min_{m \geq 1} \Big \{m + \pinvhelp ([1 + 2 + \cdots + m] - n; m-2), \pinvequal ([1 + 2 + \cdots + m] - n; m-1)  \Big \},
\end{equation}
where $\pinvhelp (n;k)$ and $\pinvequal (n; k)$ are defined as earlier.

\subsection*{Recurrence for $Q(n)$ involving $\phelp (n;k)$}
As before, we also have a recurrence that can be used to calculate $\pinv (n)$ ``more directly".  It is found by partitioning all sets of volume $n$ first according to their maximum element, which we will denote by $m$.
\paragraph*{}Let $A$ be a set of volume $n$ and maximum element $m$.  Then the set $A$ may be written uniquely in the form $A = \{0, 1, 2, \ldots, m\} \setminus B$ for some set $B \subseteq \{0, 1, \ldots , m-1\}$, where the volume of $B$ is equal to $(1 + 2 + \cdots + m) - n$.  Now we know that for all such sets $A$ and $B$, we have $per(A^c) = per(B) + (m+1)$.
\paragraph*{}This observation leads to the simple and beautiful recurrence that for all $n\geq 2$,
\begin{equation}\label{QGoodRec}
\pinv (n) = 1 + \min_{m \geq 1} \Big \{m + \phelp ([1 + 2 + \cdots + m] - n; m-1) \Big \},
\end{equation}
where $\phelp (n; k)$ is as defined earlier.

\section{Analysis of Recurrences}\label{section analysis of recurrences}
Although equations \eqref{PGoodRec} and \eqref{QGoodRec} may appear at first to be somewhat intractable, they actually are crucial in understanding the behavior of $P(n)$ (and of $Q(n)$ as well).  Attempting to squeeze as much as possible from these two equations, the author was able to obtain many surprising and interesting results, culminating in the proof of our main theorem.  However, since these minor results follow from our main theorem (and the proofs of the minor results are not particularly enlightening), we will solely devote our analysis to proving the main theorem and state these minor results in another section.
\subsection*{Relevant Lemmas and Notions}
The recurrences of equations \eqref{PGoodRec} and \eqref{QGoodRec} came about by attempting to find $P(n)$ for special values of $n$.\footnote{At this point, the author was not interested in the function $Q(n)$ whatsoever.  However, ultimately these musings about $P(n)$ led to the understanding of both functions, as will be discussed.}  For example, if it happens that $n$ may be written in the form
\[
n = 0 + 1 + 2 + \cdots + t = \dfrac{t(t+1)}{2} = Tr(t)
\]
for some integer $t \geq 0$, then it is not difficult to see that $P(n) = t$.  Thus, if $n$ is equal to $\dfrac{t(t+1)}{2} = Tr(t)$ (the $t^{\text{th}}$ triangular number), then we know exactly what $P(n)$ is.
\paragraph*{}This then led to investigating the case $n = [0 + 1 + 2 + \cdots + t] - 1 = \dfrac{t(t+1)}{2} - 1 = Tr(t) - 1$, where $t \geq 2$.  Then by considering the set $\{2, 3, 4, \ldots, t\}$, we see that $P(n) \leq t + 2$.  Moreover, with some determination in manipulating \eqref{PGoodRec}, it can be shown that for all $t \geq 2$, if $n = Tr(t) - 1$ then we have
\[
P(n) = \begin{cases}t, \qquad &\text{if $t=2$,}\\t+2, \qquad &\text{if $t \geq 3$.}\end{cases}
\]
Thus, if $n = Tr(t)$ or $n = Tr(t) - 1$, the behavior of $P(n)$ is known completely, which led to the introduction of the functions $f(n)$ and $g(n)$, as described in the following lemma:
\begin{lemma}
Let $n$ be a positive integer.  Then there exist unique positive integers $f(n)$ and $g(n)$ satisfying
\[
n = [0 + 1 + \cdots + f(n)] - g(n),
\]
where $0 \leq g(n) < f(n)$.  Moreover, $f(n)$ and $g(n)$ are given by\footnote{We will often use these explicit functional representations for $f(n)$ and $g(n)$ so that $f(0) = g(0) = 0$ is well-defined.}
\[
f(n) = \left \lceil \dfrac{-1 + \sqrt{1+8n}}{2} \right \rceil, \qquad \text{and} \qquad g(n) = \dfrac{f(n) [ f(n) + 1]}{2} - n.
\]
\end{lemma}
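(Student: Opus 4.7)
The plan is to split the lemma into three steps: existence, uniqueness, and verification of the closed forms. Everything will hinge on examining which half-open interval of the form $(Tr(k-1), Tr(k)]$ contains $n$, where $Tr(k) = k(k+1)/2$ denotes the $k^{\text{th}}$ triangular number.

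For existence, I would define $f(n)$ to be the smallest positive integer $k$ with $Tr(k) \geq n$ (this exists because $Tr(k) \to \infty$) and then set $g(n) := Tr(f(n)) - n$. Clearly $g(n) \geq 0$. The minimality of $f(n)$ forces $Tr(f(n)-1) < n$, and rewriting $Tr(f(n)-1) = Tr(f(n)) - f(n)$ yields
\[
Tr(f(n)) - f(n) < n \quad \Longleftrightarrow \quad g(n) < f(n),
\]
so the required inequalities hold.

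For uniqueness, suppose $n = Tr(k) - j$ with $k \geq 1$ and $0 \leq j < k$. Then $n \leq Tr(k)$ and $n \geq Tr(k) - (k-1) = Tr(k-1) + 1 > Tr(k-1)$. Hence $k$ is the unique positive integer with $Tr(k-1) < n \leq Tr(k)$, matching our $f(n)$, and then $j = Tr(k) - n$ is forced as well.

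For the explicit formulas, I would observe that $Tr(k) \geq n$ is equivalent to the quadratic inequality $k^2 + k - 2n \geq 0$, whose positive root is $(-1 + \sqrt{1 + 8n})/2$. The smallest positive integer $k$ satisfying this is precisely $\lceil (-1 + \sqrt{1+8n})/2 \rceil$, giving the stated formula for $f(n)$; then $g(n) = f(n)(f(n)+1)/2 - n$ is immediate from the defining equation. There is essentially no obstacle here: the only care required is the boundary case where $n = Tr(k)$ is itself triangular, in which case $\sqrt{1+8n}$ is an odd integer, the ceiling is attained exactly, and $g(n) = 0$, consistent with the range $0 \leq g(n) < f(n)$.
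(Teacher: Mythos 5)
Your proof is correct and follows essentially the same approach as the paper's: both define $f(n)$ as the smallest positive integer whose triangular number is at least $n$, both establish uniqueness by sandwiching $n$ strictly between consecutive triangular numbers $Tr(x-1) < n \leq Tr(x)$, and both derive the explicit formula by solving the resulting quadratic inequality. The only cosmetic difference is that you spell out the ceiling computation that the paper dismisses as ``simple manipulation.''
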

\begin{proof}
To demonstrate existence, let $n \geq 1$ be arbitrary, and let $F \geq 1$ be the smallest integer satisfying $n \leq 0 + 1 + \cdots + F$.  Then $n$ may be written as $n = [0 + 1 + \cdots + F] - G$, where $0 \leq G$.  Moreover, we know that $G < F$ because otherwise that would contradict the minimality of $F$.
\paragraph*{}We will now show that any positive integers satisfying the claim are necessarily equal to the asserted expresions involving $n$, which will simultaneously establish uniqueness and validate the desired representations, completing the proof.
\paragraph*{}Let $x$ and $y$ be positive integers satisfying $n = [0 + 1 + \cdots + x] - y$ with $0 \leq y < x$.  Then it follows that
\[
0 + 1 + \cdots + (x-1) = \dfrac{x (x-1)}{2} < n = (0+1+\cdots + x) -y \leq 0 + 1 + \cdots	+ x = \dfrac{x(x+1)}{2}.
\]
But after simple manipulation, we see that $x$ and $y$ must necessarily equal the desired expressions.
\end{proof}
\paragraph*{}Having defined these functions, we may now restate previous lemmas involving $P(n)$ and $Q(n)$ in these terms.  The most important result we will use combines \textbf{Propositions \ref{lowerBoundForP}} and \textbf{\ref{lowerBoundForQ}} as follows:
\begin{corollary} \label{crudeLowerBounds}
Restating earlier results in new notation, for all $n \geq 1$, we have that
\[
P(n) \geq f(n), \qquad \text{and} \qquad Q(n) \geq f(n) + 1.
\]
\end{corollary}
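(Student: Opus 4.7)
The plan is to obtain this corollary as an immediate integrality strengthening of \textbf{Propositions \ref{lowerBoundForP}} and \textbf{\ref{lowerBoundForQ}}. Those propositions already give the real-valued bounds $P(n) \geq (-1 + \sqrt{1+8n})/2$ and $Q(n) \geq (-1 + \sqrt{1+8n})/2 + 1$, so the only thing left to do is to observe that $P(n)$ and $Q(n)$ are integers and then round the right-hand sides up to the next integer.

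First I would note that for any finite $A \subseteq \{0,1,\ldots\}$, the perimeter $per(A) = \sum_{z \in \partial A} z$ is a finite sum of nonnegative integers and hence is itself a nonnegative integer; the same applies to $per(A^c)$ whenever it is finite (which is guaranteed when $vol(A)$ is finite, since then $A^c$ has cofinitely many elements and $\partial A^c$ is forced to be finite). Therefore both $P(n)$ and $Q(n)$ are nonnegative integers for every $n \geq 1$.

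Next I would apply the ceiling function to both sides of the inequalities in \textbf{Propositions \ref{lowerBoundForP}} and \textbf{\ref{lowerBoundForQ}}. Since the ceiling is monotone and fixes integers, the integrality of $P(n)$ yields
\[
P(n) = \left\lceil P(n) \right\rceil \geq \left\lceil \frac{-1 + \sqrt{1+8n}}{2} \right\rceil = f(n),
\]
and likewise, using $\lceil x + 1 \rceil = \lceil x \rceil + 1$ for the second bound,
\[
Q(n) = \left\lceil Q(n) \right\rceil \geq \left\lceil \frac{-1 + \sqrt{1+8n}}{2} + 1 \right\rceil = f(n) + 1.
\]
This is exactly the claim.

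There is no real obstacle here; the only thing to double-check is the standing assumption that the relevant sets are finite, so that the perimeters are finite integer sums. For the $P(n)$ bound this is automatic since any $A$ with $vol(A) = n < \infty$ is finite, and for the $Q(n)$ bound \textbf{Lemma \ref{complementInequality}} ensures $per(A^c)$ is finite whenever $per(A)$ is. With these caveats the result is just the observation that an integer-valued quantity dominated from below by a real number $x$ is in fact dominated by $\lceil x \rceil$.
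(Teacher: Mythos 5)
Your proof is correct and is essentially what the paper intends by ``restating earlier results in new notation'': the corollary is Propositions \ref{lowerBoundForP} and \ref{lowerBoundForQ} plus the observation that $P(n)$ and $Q(n)$ are integers, so the real lower bound $(-1+\sqrt{1+8n})/2$ may be replaced by its ceiling, which is exactly $f(n)$. The integrality check and the identity $\lceil x+1\rceil = \lceil x\rceil + 1$ are the right (and only) details to supply.
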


\paragraph*{}Finally, before moving on, we present two more results on the functions $f(x)$ and $g(x)$ which will be necessary in the coming sections.

\begin{proposition}\label{fReps}
Let $f(n) = \left \lceil \dfrac{-1+\sqrt{1+8n}}{2} \right \rceil$ as before.  Then for all integers $n \geq 0$, we have
\[
f(n) = \left \lceil \dfrac{-1+\sqrt{1+8n}}{2} \right \rceil = \left \lceil \sqrt{2n} -1/2 \right \rceil = \left[ \sqrt{2n} \right],
\]
where for all $x \in \R$, $[x]$ denotes the nearest integer to $x$.  We will prove that $\sqrt{2n}$ is never a half-integer, which justifies this definition.
\end{proposition}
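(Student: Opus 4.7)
The plan is to handle the four claims in a natural order: first the aside that $\sqrt{2n}$ is never a half-integer, then show the two rightmost expressions agree, and finally show the leftmost and middle expressions agree. All three of these reduce to elementary manipulations.

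First I would dispose of the half-integer claim. If $\sqrt{2n} = k + 1/2$ for some nonnegative integer $k$, then $2n = (k+1/2)^2 = k^2 + k + 1/4$, which has a nonzero fractional part and so cannot equal the integer $2n$. This justifies writing $[\sqrt{2n}]$ unambiguously.

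Next I would verify $\lceil \sqrt{2n} - 1/2 \rceil = [\sqrt{2n}]$. Write $\sqrt{2n} = k + r$ with $k \in \Nz$ and $0 \le r < 1$; by the previous paragraph $r \neq 1/2$. If $r < 1/2$, then $\sqrt{2n} - 1/2 = k + (r - 1/2) \in (k-1, k)$, so its ceiling is $k$, which is also the nearest integer to $\sqrt{2n}$. If $r > 1/2$, then $\sqrt{2n} - 1/2 = k + (r - 1/2) \in (k, k+1)$, so its ceiling is $k+1$, again the nearest integer to $\sqrt{2n}$. (The edge case $r = 0$ gives $k - 1/2$, ceiling $k = [\sqrt{2n}]$, fine.)

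The last equality is the only step with any bite. Set $\alpha = \sqrt{2n} - 1/2$ and $\beta = (-1+\sqrt{1+8n})/2$. Since $\sqrt{1+8n} > \sqrt{8n} = 2\sqrt{2n}$, we have $\alpha < \beta$, hence $\lceil \alpha \rceil \le \lceil \beta \rceil$. I would show equality by proving that no integer $m$ lies in $[\alpha, \beta)$. Suppose such $m$ exists. From $\alpha \le m$ we get $\sqrt{2n} \le m + 1/2$, so squaring and clearing denominators gives $8n \le (2m+1)^2$. From $m < \beta$ we get $2m+1 < \sqrt{1+8n}$, so $(2m+1)^2 < 1 + 8n$, i.e. $(2m+1)^2 \le 8n$. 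Combining yields $8n = (2m+1)^2$, impossible modulo $2$ (even equals odd). Thus no such $m$ exists, so $\lceil \alpha \rceil = \lceil \beta \rceil$, completing the chain of equalities.

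The only slightly delicate step is the parity/modular argument at the end of the last paragraph; everything else is bookkeeping once the half-integer observation is in hand.
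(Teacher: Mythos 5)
Your proof is correct and takes essentially the same approach as the paper's: the key step in both is the parity contradiction $8n = (2m+1)^2$ (even equals odd) to rule out an integer between $\sqrt{2n}-1/2$ and $(-1+\sqrt{1+8n})/2$, and the half-integer observation (which the paper phrases as $\sqrt{2n}-1/2$ never being an integer, appealing to irrationality of $\sqrt{a}$, while you verify it by direct computation) to justify the nearest-integer equality. The differences are cosmetic ordering and bookkeeping.
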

\begin{proof}
We will start by showing that the first part of the stated equation holds.  We will then show that $f(n) \neq \sqrt{2n}-1/2$ for any integers $n \geq 0$, and that will imply
\[
\sqrt{2n} - 1/2 < f(n) < \sqrt{2n} + 1/2,
\]
which will complete the proof.
\paragraph*{}By way of contradiction, suppose that the first two representations are not equal.  Then since
\[
\sqrt{2n} -1/2 < \dfrac{-1 + \sqrt{1+8n}}{2},
\]
this would imply that there exist integers $p \in \Z$ and $n \in \{0, 1, \ldots \}$ such that
\[
\sqrt{2n} -1/2 \leq p < \dfrac{\sqrt{1+8n} - 1}{2}.
\]
Now multiplying both sides by 2, adding 1, then squaring gives us
\[
8n \leq (2p +1)^2 < 8n+1.
\]
But since $n$ and $p$ are integers, this forces $8n = (2p +1)^2$, which taken modulo 2 yields a contradiction.
\paragraph*{}Furthermore, we know that for all nonnegative integers, $a$, $\sqrt{a}$ is either an integer or irrational.  Therefore, this implies that $\sqrt{2n} - 1/2$ is never an integer for any integers $n\geq0$.  But $f(n) \in \Z$ for all $n$.  Thus $f(n) \neq \sqrt{2n} -1/2$ for any $n$, which completes the proof.
\end{proof}
\begin{proposition}\label{gBound}
For all integers $n \geq 0$, let $g(n) = f(n) [f(n) +1] /2 - n$, where $f(n)$ is defined as before.  Then for all integers $L \geq 0$ and $n \geq 0$, we have
\[
g^{L} (n) \leq 2 \cdot (n/2) ^{1/2^{L}}.
\]
\end{proposition}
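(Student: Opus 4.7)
The plan is to reduce everything to the single-step inequality $g(n) \leq \sqrt{2n}$ and then iterate. Note that the desired bound at level $L$ can be rewritten as $g^{L}(n) \leq 2(n/2)^{1/2^{L}}$, so the $L = 1$ instance is literally $g(n) \leq \sqrt{2n}$, and the general case should follow by a straightforward induction.

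First I would establish the single-step bound $g(n) \leq \sqrt{2n}$ for all $n \geq 0$. The case $n = 0$ is immediate since $f(0) = g(0) = 0$. For $n \geq 1$, the lemma preceding \textbf{Proposition \ref{fReps}} gives the strict inequality $g(n) < f(n)$, and since both values are integers this forces $g(n) \leq f(n) - 1$. Combining with the representation $f(n) = \lceil \sqrt{2n} - 1/2 \rceil < \sqrt{2n} + 1/2$ from \textbf{Proposition \ref{fReps}} then yields $g(n) \leq f(n) - 1 < \sqrt{2n} - 1/2 < \sqrt{2n}$, as required.

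With the single-step bound in hand, I would induct on $L$. The base case $L = 0$ is the tautology $g^{0}(n) = n = 2(n/2)^{1}$. For the inductive step, set $m = g^{L}(n)$ and apply the single-step bound to obtain
\[
g^{L+1}(n) = g(m) \leq \sqrt{2m} = 2(m/2)^{1/2}.
\]
The inductive hypothesis says $m \leq 2(n/2)^{1/2^{L}}$, i.e.\ $m/2 \leq (n/2)^{1/2^{L}}$; raising both (nonnegative) sides to the $1/2$ power and multiplying by $2$ yields $2(m/2)^{1/2} \leq 2(n/2)^{1/2^{L+1}}$, which closes the induction.

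Insofar as there is an obstacle, it lies entirely in the single-step inequality: a naive bound of the form $g(n) \leq \sqrt{2n} + c$ with any positive constant $c$ would accumulate under iteration and destroy the clean doubly-exponentially-decaying form $2(n/2)^{1/2^{L}}$. The crucial feature is that $g(n) < f(n)$ is strict, giving an integer's worth of slack that exactly absorbs the $1/2$ gap between $f(n)$ and $\sqrt{2n}$; this is why \textbf{Proposition \ref{fReps}}'s observation that $\sqrt{2n}$ is never a half-integer plays a role here, rather than being an aesthetic aside.
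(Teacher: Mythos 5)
Your proof is correct and follows essentially the same route as the paper's: both arguments rest on the single-step bound $g(n) \leq f(n) - 1 < \sqrt{2n} - 1/2 < \sqrt{2n}$ and then iterate by induction on $L$, using monotonicity of the square root. The only (harmless) difference is that you isolate the single-step inequality as a separate preliminary claim and treat $n=0$ explicitly, whereas the paper folds it into the inductive step.
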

\begin{proof}
The proof is by induction on $L$.  If $L=0$, then the claim is trivially true, which establishes the base case.  Now suppose the claim holds for $L = m$.  Then for all $n \geq 0$, we have
\[
g(n) \leq f(n) - 1 < \sqrt{2n} - 1/2 < \sqrt{2n}.
\]
Therefore, we have
\[
g^{m+1}(n) = g( g^{m} (n) ) < \sqrt{2 \cdot g^{m} (n)}.
\]
Since the square root function is increasing, we have by the induction hypothesis
\[
g^{m+1}(n) < \sqrt{2 \cdot g^{m} (n)} < \sqrt{2 \cdot 2 \cdot (n/2) ^{1/2^{m}}} = 2 \cdot (n/2)^{1/2^{m+1}}.
\]
Thus, the claim holds for $m+1$ as well, which completes the proof.
\end{proof}

\subsection*{Upper Bounds and Asymptotics for $P(n)$ and $\pinv (n)$}
The first real gems that we will extract from equations \eqref{PGoodRec} and \eqref{QGoodRec} are simple upper bounds on $P(n)$ and $Q(n)$.  Then using these points and the previous few results, we obtain good absolute bounds on both $P(n)$ and $Q(n)$ in terms of $n$, which reveal the asymptotic behavior of $P(n)$ and $Q(n)$, and which are tighter than those provided in \textbf{Theorem \ref{theirResult}} in \cite{Miller}.
\begin{theorem}\label{goodUpperBounds}
Let $f(n)$ and $g(n)$ be defined as before.  Then for all $n \geq 0$, we have the bounds
\begin{eqnarray*}
P(n) &\leq& f(n) + Q(g(n)), \qquad \text{and}\\
Q(n) &\leq& 1 + f(n) + P(g(n)).
\end{eqnarray*}
\end{theorem}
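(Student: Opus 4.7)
The plan is to exhibit a common family of witness sets for both bounds. Since $n = f(n)(f(n)+1)/2 - g(n)$ with $0 \leq g(n) < f(n)$, any set $B \subseteq \{0,1,\ldots\}$ with $vol(B) = g(n)$ has largest element at most $g(n) \leq f(n)-1$, so $B \subseteq \{0,1,\ldots,f(n)-1\}$. Define
\[
A \;=\; \{0,1,\ldots,f(n)\} \setminus B.
\]
Then $vol(A) = f(n)(f(n)+1)/2 - g(n) = n$, so $A$ is a legal competitor for both $P(n)$ and, via $A^c$, for $Q(n)$.

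For the first inequality, choose $B$ so that $per(B^c) = Q(g(n))$ (taking $B = \emptyset$ if $g(n) = 0$). Since $f(n) \in A$ and $f(n)+1 \notin A$, we have $f(n) \in \partial A$. For $z \in \{0,1,\ldots,f(n)-1\}$, each of the predicates ``$z \in A$'', ``$z-1 \in A$'', ``$z+1 \in A$'' agrees with the analogous predicate with $A$ replaced by $B^c$, because $A$ and $B^c$ coincide on $\{0,\ldots,f(n)\}$ (using $f(n) \notin B$). Hence $\partial A \cap \{0,\ldots,f(n)-1\} \subseteq \partial B^c$, and
\[
per(A) \;=\; f(n) \;+\!\!\! \sum_{z \in \partial A \cap \{0,\ldots,f(n)-1\}}\!\!\! z \;\leq\; f(n) + per(B^c) \;=\; f(n) + Q(g(n)),
\]
which gives $P(n) \leq per(A) \leq f(n) + Q(g(n))$.

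For the second inequality, instead choose $B$ so that $per(B) = P(g(n))$. Now $A^c = B \cup \{f(n)+1, f(n)+2, \ldots\}$. For $z \geq f(n)+2$ both $z-1$ and $z+1$ lie in $A^c$, so $z \notin \partial A^c$; the element $f(n)+1$ is in $A^c$ with $f(n) \notin A^c$, so $f(n)+1 \in \partial A^c$. For $z \in \{0,\ldots,f(n)-1\}$, membership of $z$, $z-1$, $z+1$ in $A^c$ coincides with membership in $B$ (using $B \subseteq \{0,\ldots,f(n)-1\}$ and $f(n) \notin B$). Thus $\partial A^c = \partial B \sqcup \{f(n)+1\}$, and
\[
per(A^c) \;=\; per(B) + (f(n)+1) \;=\; 1 + f(n) + P(g(n)),
\]
giving $Q(n) \leq per(A^c) = 1 + f(n) + P(g(n))$.

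The main obstacle is the careful boundary bookkeeping at the seam $z = f(n)$. In the first inequality a strict inequality can arise when $g(n) = f(n)-1$ and $f(n)-1 \in B$: then $f(n) \in \partial B^c$ is counted in $per(B^c)$ but contributes nothing to $\partial A \cap \{0,\ldots,f(n)-1\}$. This slack only strengthens the bound. The degenerate cases $n = 0$ or $g(n) = 0$ (where $B = \emptyset$ and $A = \{0,1,\ldots,f(n)\}$) are immediate: $per(A) = f(n)$ and $per(A^c) = f(n)+1$, which match both inequalities.
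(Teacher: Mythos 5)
Your proof is correct. It rests on the same key decomposition as the paper's — writing a volume-$n$ witness as $A=\{0,1,\ldots,f(n)\}\setminus B$ with $vol(B)=g(n)$ — but you take a genuinely more self-contained route: the paper proves the theorem by specializing its recurrences \eqref{PGoodRec} and \eqref{QGoodRec} to the single choice $m=f(n)$ and then identifying $\pinvhelp(g(n);f(n)-1)$ with $Q(g(n))$ and $\phelp(g(n);f(n)-1)$ with $P(g(n))$, whereas you bypass the recurrence machinery entirely and verify the boundary bookkeeping at the seam $z=f(n)$ directly from the definitions. This buys two things: you only need the easy ``exhibit a competitor'' direction rather than the exhaustive partition underlying \eqref{PGoodRec}, and you are more explicit than the paper about the one delicate case, namely that $per(A)\leq f(n)+per(B^c)$ can be strict when $f(n)-1\in B$ (the paper's Section \ref{section second recurrences} records this as the case $l=m-1$, where $per(A)=per(B^c)$ rather than $m+per(B^c)$, so your inequality is consistent with it). The only nit is that you silently assume a minimizer $B$ achieving $Q(g(n))$ (resp.\ $P(g(n))$) exists; this is immediate since every set of volume $g(n)$ is contained in $\{0,\ldots,g(n)\}$, so the minimum is over a finite collection, but it is worth a clause. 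Your handling of the degenerate cases $n=0$ and $g(n)=0$ is also fine.
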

\begin{proof}
For $n=0$ and $n=1$, the two inequalities hold.  We know from \eqref{PGoodRec} that for all $n \geq 2$,
\begin{eqnarray*}
P(n) &=& \min_{m \geq 1} \Big \{m + \pinvhelp ([1 + 2 + \cdots + m] - n; m-2), \pinvequal ([1 + 2 + \cdots + m] - n; m-1)  \Big \}\\
&\leq & \min \Big \{f(n) + \pinvhelp ([1 + 2 + \cdots + f(n)] - n; f(n)-2), \pinvequal ([1 + 2 + \cdots + f(n)] - n; f(n)-1)  \Big \}\\
&=& \min \Big \{f(n) + \pinvhelp (g(n); f(n)-2), \pinvequal (g(n); f(n)-1)  \Big \}\\
&\leq & f(n) + \min \Big \{\pinvhelp (g(n); f(n)-2), \pinvequal (g(n); f(n)-1) \Big \} = f(n) + \pinvhelp(g(n); f(n)-1).
\end{eqnarray*}
But since $g(n) \leq f(n) -1$, we have that $\pinvhelp( g(n); f(n)-1) = \pinv (g(n))$, which implies that for all $n \geq 2$
\[
P(n) \leq f(n) + \pinv ( g(n)).
\]
Similarly, for all $n \geq 2$, we know from \eqref{QGoodRec} that
\begin{eqnarray*}
\pinv (n) &=& 1 + \min_{m \geq 1} \Big \{m + \phelp ([1 + 2 + \cdots + m] - n; m-1) \Big \}\\
&\leq& 1 + f(n) + \phelp ([1 + 2 + \cdots + f(n)] - n; f(n)-1) = 1 + f(n) + \phelp(g(n); f(n)-1)\\
&=& 1 + f(n) + P(g(n)).
\end{eqnarray*}
Therefore, both inequalities hold for all $n \geq 0$, as desired.
\end{proof}
\paragraph*{}These upper bounds immediately give rise to the following corollaries.
\begin{corollary}
For all nonnegative integers $n$ and $L$, we have that
\begin{eqnarray*}
P(n) &\leq& L + P( g^{2L} (n)) + \sum_{i=0} ^{2L-1} f(g^{i}(n)), \qquad \text{and}\\
\pinv (n) &\leq & L + \pinv( g^{2L} (n)) + \sum_{i=0} ^{2L-1} f(g^{i}(n)),
\end{eqnarray*}
where $g^i (n)$ is the $i$-fold composition of $g$ evaluated at $n$, and by convention we take $g^0 (n) = n$.
\end{corollary}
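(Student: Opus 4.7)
The plan is a straightforward induction on $L$, using the two bounds of \textbf{Theorem \ref{goodUpperBounds}} in tandem. The key observation is that each inequality in that theorem trades a value of $P$ (or $Q$) at $n$ for a value of the \emph{other} function at $g(n)$, so applying the pair twice returns us to $P$ (respectively $Q$) evaluated at $g^{2}(n)$. Thus one full ``cycle'' advances the index by $2$ inside $g$ and accumulates exactly one copy of the constant $1$ plus two copies of $f$, which matches precisely the structure of the claimed bound.

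For the base case $L=0$, the asserted inequality reads $P(n) \leq 0 + P(g^{0}(n)) + \sum_{i=0}^{-1} f(g^{i}(n)) = P(n)$, which is trivially true (and likewise for $Q$). For the inductive step, assume the bounds hold at $L$. Applying \textbf{Theorem \ref{goodUpperBounds}} first to $P(n)$ and then to $Q(g(n))$ yields
\[
P(n) \leq f(n) + Q(g(n)) \leq f(n) + \bigl(1 + f(g(n)) + P(g^{2}(n))\bigr) = 1 + f(n) + f(g(n)) + P(g^{2}(n)).
\]
Now apply the induction hypothesis to the term $P(g^{2}(n))$, which gives
\[
P(g^{2}(n)) \leq L + P(g^{2L}(g^{2}(n))) + \sum_{i=0}^{2L-1} f(g^{i}(g^{2}(n))) = L + P(g^{2(L+1)}(n)) + \sum_{i=2}^{2L+1} f(g^{i}(n)).
\]
Combining these two displays and re-indexing the sum yields
\[
P(n) \leq (L+1) + P(g^{2(L+1)}(n)) + \sum_{i=0}^{2L+1} f(g^{i}(n)),
\]
which is the desired bound at $L+1$. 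The argument for $Q(n)$ is identical up to swapping the roles of $P$ and $Q$ in each invocation of \textbf{Theorem \ref{goodUpperBounds}}.

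There is essentially no obstacle here, since the argument is a bookkeeping exercise: the only care needed is in shifting indices in the sum $\sum_{i=0}^{2L-1} f(g^{i}(\,\cdot\,))$ when we pass from $g^{2}(n)$ back to $n$, and in confirming that the two constants contributed by the $P \to Q$ and $Q \to P$ applications together produce exactly the single $+1$ needed per cycle (the $Q \to P$ step contributes the $1$, and the $P \to Q$ step contributes none). Everything else is immediate from \textbf{Theorem \ref{goodUpperBounds}}.
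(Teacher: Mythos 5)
Your proof is correct and takes essentially the same approach as the paper, which simply states that the bounds follow by "repeatedly appealing to the results of Theorem \ref{goodUpperBounds}"; your induction on $L$ is just a careful formalization of that repeated application, with the bookkeeping of the sum indices handled explicitly.
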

\begin{proof}
These inequalities are obtained simply by repeatedly appealing to the results of \textbf{Theorem \ref{goodUpperBounds}}.
\end{proof}
\begin{theorem}\label{myBounds}
Let $P(n)$ and $Q(n)$ be as given.  Then we have $P(n) \sim Q(n) \sim \sqrt{2} n^{1/2}$.  Moreover, for all $n > 2$, we have
\begin{eqnarray*}
\sqrt{2} n^{1/2} - 1/2 < &P(n)& \leq \sqrt{2}n^{1/2} + (2^{3/4} \cdot n^{1/4} + 1)[\log_2 ( \log_2 (n/2)) - 1]  + 7, \qquad \text{and}\\
\sqrt{2} n^{1/2} + 1/2 < &Q(n)& \leq \sqrt{2}n^{1/2} + (2^{3/4} \cdot n^{1/4} + 1)[\log_2 ( \log_2 (n/2)) - 1]  + 7.
\end{eqnarray*}
\end{theorem}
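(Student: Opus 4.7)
The lower bounds for $P(n)$ and $Q(n)$ are immediate restatements of Propositions \ref{lowerBoundForP} and \ref{lowerBoundForQ}, specialized to any subset $A \subseteq \{0, 1, \ldots\}$ of volume $n$. Given the upper bounds below, the asymptotic claim $P(n) \sim Q(n) \sim \sqrt{2} n^{1/2}$ then follows by sandwiching, since the error in the upper bound will be of order $n^{1/4} \log \log n = o(n^{1/2})$.

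The main work is the upper bound, which I will establish by iterating the corollary to Theorem \ref{goodUpperBounds}:
\[
P(n) \leq L + P(g^{2L}(n)) + \sum_{i=0}^{2L-1} f(g^i(n)),
\]
for an $L$ to be chosen. Each summand is controlled via Propositions \ref{fReps} and \ref{gBound}, which give $f(m) \leq \sqrt{2m} + 1/2$ and $g^i(n) \leq 2(n/2)^{1/2^i}$, so $f(g^i(n)) \leq 2(n/2)^{1/2^{i+1}} + 1/2$. The $i = 0$ contribution supplies exactly the leading $\sqrt{2}n^{1/2}$, since $2(n/2)^{1/2} = \sqrt{2}n^{1/2}$; for $i \geq 1$ the bound $(n/2)^{1/2^{i+1}} \leq (n/2)^{1/4}$ (valid since $n/2 \geq 1$) makes each term at most $2^{3/4}n^{1/4} + 1/2$.

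The choice of $L$ must balance two competing demands: $2L$ should be as small as possible to minimize the number of $n^{1/4}$-sized terms, yet large enough that $g^{2L}(n)$ is bounded by a small constant. Proposition \ref{gBound} shows that $g^{2L}(n) \leq 4$ as soon as $2L \geq \log_2 \log_2(n/2)$, so I will take $L = \lceil \log_2 \log_2(n/2) / 2 \rceil$; the remaining term $P(g^{2L}(n))$ is then bounded by $P(m)$ for some $m \leq 4$, a small explicit constant. Assembling the pieces gives a bound of the shape $\sqrt{2}n^{1/2} + (2^{3/4}n^{1/4} + 1) \log_2 \log_2(n/2) + O(1)$, which must then be massaged into the exact form stated. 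For $Q(n)$ the argument is entirely parallel, starting from $Q(n) \leq 1 + f(n) + P(g(n))$ and iterating identically.

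I expect the main obstacle to be the careful bookkeeping needed to match the precise additive constants in the theorem statement, in particular the ``$-1$'' inside the bracket and the final ``$+7$''. This requires tracking the slack introduced by rounding $L$ to an integer (so that $2L$ may exceed $\log_2 \log_2(n/2)$ by nearly $1$), the strict inequalities in Propositions \ref{fReps} and \ref{gBound}, and the exact values of $P(m)$ and $Q(m)$ for the small $m$ arising from the iteration. The inequalities will also need to be verified directly for the handful of small values $n > 2$ for which $\log_2 \log_2(n/2)$ is undefined or tiny.
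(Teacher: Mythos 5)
Your strategy is the same as the paper's: iterate the corollary of Theorem \ref{goodUpperBounds}, bound each $f(g^i(n))$ via Propositions \ref{fReps} and \ref{gBound}, peel off the $i=0$ term as $\sqrt{2}n^{1/2}$, bound the remaining terms by $2^{3/4}n^{1/4}+1/2$, and choose $L$ so that $g^{2L}(n)$ is an explicit small constant. However, your specific choice $L = \lceil \log_2\log_2(n/2)/2 \rceil$ cannot produce the stated bound, and the obstruction is not ``bookkeeping'' that can be absorbed into the $+7$: the final inequality has the shape
\[
P(n) \le \sqrt{2}n^{1/2} + \bigl(2^{3/4}n^{1/4}+1\bigr)\cdot 2L + P\bigl(g^{2L}(n)\bigr),
\]
so the quantity $2L$ is multiplied by something of order $n^{1/4}$, and any surplus in $2L$ over $\log_2\log_2(n/2)-1$ costs $\Theta(n^{1/4})$, not $O(1)$. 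The paper instead takes $L$ to be (essentially) $(\log_2\log_2(n/2)-1)/2$, i.e.\ \emph{smaller} than your choice by at least $1/2$; this is what produces the $-1$ inside the bracket. The price is that Proposition \ref{gBound} then only guarantees $g^{2L}(n)\le 8$ rather than $\le 4$, but one checks directly that $P(m), Q(m)\le 7$ for all $m\le 8$, and that $\le 7$ is the source of the final $+7$. So the fix to your argument is to move $L$ down, not up, and then pay for the slightly larger residual $g^{2L}(n)$ with the explicit constant $7$. (Worth noting: the paper treats $L=(\log_2\log_2(n/2)-1)/2$ as though it were an integer when plugging into the corollary, so its proof has the same integrality wrinkle you correctly flag; a fully rigorous version would take $L=\lceil(\log_2\log_2(n/2)-1)/2\rceil$ and slightly weaken the stated bound, or tighten the intermediate estimates.)
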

\begin{proof}
The lower bounds in the asserted inequalities have already been proven.  To prove the upper bounds, we merely combine the results in the last corollary with several of the previously obtained bounds on $f(n)$ and $g(n)$.
\paragraph*{}More specifically, assuming $n > 2$, we know from \textbf{Lemma \ref{gBound}} that if $L \geq (\log_2 ( \log_2 (n/2)) - 1)/2$, then
\[
g^{2L}(n) \leq 2 \cdot (n/2) ^{1/2^{(\log_2 ( \log_2 (n/2)) - 1)}} = \cdots = 8.
\]
By considering values of $P(n)$ and $Q(n)$ for $n \leq 8$, we see that $g^{2L}(n) \leq 8$ implies $P(g^{2L}(n)) \leq 7$ and $Q(g^{2L}(n)) \leq 7$.  Therefore, we have
\begin{eqnarray*}
P(n) & \leq & L + P(g^{2L}(n)) + \sum_{i=0} ^{2L-1} f(g^{i}(n))\\
& \leq & L + P(g^{2L}(n)) + \sum_{i=0} ^{2L-1} \sqrt{2 g^{i}(n)} + 1/2\\
& \leq & 2L + P(g^{2L}(n)) + \sum_{i=0} ^{2L-1} \sqrt{4 \cdot (n/2) ^{1/2^{i}}}\\
& \leq & 2L + P(g^{2L}(n)) + \sqrt{2n} + 2 \sum_{i=1} ^{2L-1} \sqrt{(n/2) ^{1/2^{i}}}\\
& \leq & 2L + P(g^{2L}(n)) + \sqrt{2n} + 4L(n/2) ^{1/4}.
\end{eqnarray*}
Then taking $L=(\log_2 ( \log_2 (n/2)) - 1)/2$, we have that $P(g^{2L}(n)) \leq 7$, which gives us the desired bound on $P(n)$, and the bound on $Q(n)$ is obtained in the same way.
\end{proof}
Note that these bounds on $P(n)$ are slightly better than those of \cite{Miller} stated in \textbf{Theorem \ref{theirResult}}.  Also note that the upper bound on the summation is very crude.  These bounds are sufficiently good for our purposes, so we will leave them as they are.

\section{Obtaining \textit{Good} Recurrences for $P(n)$ and $Q(n)$} \label{section good recurrences}
Although the bounds in \textbf{Theorem \ref{myBounds}} are quite good, they reveal nothing about the actual fluctuations of $P(n)$ and $Q(n)$.  And although we have already obtained multiple recurrence relations for finding exact values, these relations all involve auxilary helper functions, multiple variables, and unweildy minimum functions.  In this section, we will remedy this by providing simple and satisfying recurrences and even explicit formulae.
\paragraph*{}First, we will again make use of equations \eqref{PGoodRec} and \eqref{QGoodRec} to provide new lower bounds on $P(n)$ and $Q(n)$.  By appealing to the analytic bounds of \textbf{Theorem \ref{myBounds}}, we will then show that for all sufficiently large $n$, these lower bounds (surprisingly) simplify and coincide with the upper bounds we provided in \textbf{Theorem \ref{goodUpperBounds}} yielding an elegant representation for the functions.  We conclude the section by noting some corollaries of this result and describing some quasi-explicit formulae.

\subsection*{Lower Bounds on $P(n)$ and $Q(n)$}
Before deriving our lower bounds, we briefly note the following lemmas.
\begin{lemma}\label{infinityBound}
Let $n$ and $k$ be positive integers with $k < f(n)$.  Then $\phelp(n;k),$ $\pinvhelp(n;k)$, and $\pinvequal(n;k)$ are all infinite.
\end{lemma}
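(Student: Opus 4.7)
The plan is to observe that when $k < f(n)$, no subset of $\{0, 1, \ldots, k\}$ can achieve volume $n$ at all, so each of $p(n;k)$, $q(n;k)$, and $\pi(n;k)$ is a minimum over the empty set and is therefore $+\infty$ by the convention that has been used implicitly throughout (e.g., in the boundary conditions listed after equation \eqref{pHelpRec}).

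Concretely, I would first note that the largest possible volume of a subset of $\{0, 1, \ldots, k\}$ is realized by the full set and equals $k(k+1)/2$. Then I would invoke the lemma immediately preceding the statement, which guarantees that $n$ admits the representation
\[
n = \frac{f(n)[f(n)+1]}{2} - g(n), \qquad 0 \leq g(n) < f(n).
\]
Since $g(n) < f(n)$, this gives $n > f(n)[f(n)+1]/2 - f(n) = f(n)[f(n)-1]/2$.

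Finally, since $k$ and $f(n)$ are both integers with $k < f(n)$, we have $k \leq f(n) - 1$, and hence
\[
\frac{k(k+1)}{2} \leq \frac{(f(n)-1)f(n)}{2} < n.
\]
So there is no $A \subseteq \{0, 1, \ldots, k\}$ with $\operatorname{vol}(A) = n$. This disposes of all three functions simultaneously: $p$ and $q$ minimize over the empty family $\{A \subseteq \{0, \ldots, k\} : \operatorname{vol}(A) = n\}$, and the feasible family for $\pi$ is a further subset, still empty.

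The main obstacle is really just clerical: making sure the strict inequality $g(n) < f(n)$ is used (not merely $g(n) \leq f(n)$) so that the strict $n > f(n)[f(n)-1]/2$ goes through, and flagging the $\min\emptyset = +\infty$ convention. There is no genuine combinatorial difficulty here.
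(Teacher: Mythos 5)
Your proposal is correct and takes essentially the same approach as the paper, which simply observes that for $k < f(n)$ there are no subsets of $\{0, 1, \ldots, k\}$ with volume $n$, so each minimum is over an empty family. You merely spell out the arithmetic (via $k(k+1)/2 < n$ and $\min\emptyset = +\infty$) that the paper leaves implicit.
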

\begin{proof}
This follows immediately from the fact that if $k < f(n)$, then there are no subsets of $\{0, 1, \ldots, k\}$ having volume $n$.
\end{proof}

\begin{lemma}\label{helperBound}
Let $n$ and $m$ be positive integers with $m > f(n)$.  Then we have
\begin{eqnarray*}
m + \phelp ([1 + 2 + \cdots + m] - n; m-1) &\geq& f(n) + \sqrt{2 (g(n) + f(n)+1)} + 1/2 \qquad \qquad \text{and}\\
m + \pinvhelp ([1 + 2+\cdots +m] - n; m-2) &\geq& f(n) + \sqrt{2 (g(n) + f(n)+1)} + 3/2.
\end{eqnarray*}
\end{lemma}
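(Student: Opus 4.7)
The plan is to first reduce the helper functions to their ``global'' counterparts: since $\phelp(N;K)$ and $\pinvhelp(N;K)$ are minima over restricted classes of sets, we have $\phelp(N;K) \geq P(N)$ and $\pinvhelp(N;K) \geq Q(N)$ for every $N$ and $K$, and Propositions \ref{lowerBoundForP} and \ref{lowerBoundForQ} then supply $P(N) \geq \sqrt{2N} - 1/2$ and $Q(N) \geq \sqrt{2N} + 1/2$. If a helper function happens to be infinite (possible by \textbf{Lemma \ref{infinityBound}}), the asserted inequality is vacuous, so we may assume throughout that the finite regime applies.

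To exploit these bounds, I would parameterize the hypothesis $m > f(n)$ by writing $m = f + j$ with $j \geq 1$, where $f = f(n)$ and $g = g(n)$, so that $n = f(f+1)/2 - g$. A short expansion gives
\[
\tfrac{m(m+1)}{2} - n = \tfrac{j(2f+j+1)}{2} + g,
\]
and in particular this quantity equals $f + g + 1$ when $j = 1$. This value is exactly the argument of the square root appearing on the right-hand sides of the two target inequalities, which is the key identity making the bounds tight at the smallest admissible $m$.

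Substituting, both target inequalities reduce to purely algebraic claims: for all $j \geq 1$ and all $f, g \geq 0$,
\[
j - 1 + \sqrt{j(2f+j+1) + 2g} \geq \sqrt{2(f+g+1)} \quad \text{and} \quad j + \sqrt{j(2f+j+1) + 2g} \geq 1 + \sqrt{2(f+g+1)}.
\]
At $j = 1$ both become equalities, since $j(2f+j+1) + 2g = 2(f+g+1)$ there. For $j \geq 2$ the left-hand sides are strictly larger, since $j(2f+j+1) + 2g$ and the additive term ($j-1$ or $j$) are each strictly increasing in $j$ for $f \geq 0$.

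The main obstacle, if it can be called one, is spotting the algebraic identity that collapses the $j = 1$ case to an equality between the crude lower bound and the target; once that alignment is noticed, the rest is a brief monotonicity check, and everything else follows from already-established propositions.
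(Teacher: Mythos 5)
Your proposal is correct and follows essentially the same route as the paper's proof: both reduce the helper functions to $P$ and $Q$, apply the crude lower bounds from Propositions \ref{lowerBoundForP} and \ref{lowerBoundForQ}, and observe that $\tfrac{m(m+1)}{2} - n$ hits its minimum value $f(n)+g(n)+1$ at $m = f(n)+1$. The only stylistic difference is that the paper bounds $m \geq f(n)+1$ and $\tfrac{m(m+1)}{2}-n \geq g(n)+f(n)+1$ separately and adds, whereas you substitute $m = f+j$ and verify the combined inequality via equality at $j=1$ plus monotonicity in $j$; the underlying observation is identical.
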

\begin{proof}
These inequalities both follow directly from the simple lower bounds used in \textbf{Theorem \ref{myBounds}}.  Consider the following chain of inequalities
\begin{eqnarray*}
\phelp ([1 + 2+\cdots +m] - n; m-1) &\geq& P ([1 + 2+\cdots +m] - n) \geq \sqrt{2 ([1 + 2+\cdots +m] - n)} - 1/2\\
&\geq& \sqrt{2 (g(n) + [f(n)+1] + [f(n)+2] + \cdots + m)} - 1/2\\
&\geq& \sqrt{2 (g(n) + f(n)+1)} - 1/2.
\end{eqnarray*}
Then adding $m \geq f(n) +1$ to both sides completes the proof.  The second inequality is proven in the same way.
\end{proof}

\begin{lemma}\label{piBound}
Let $n$ and $m$ be positive integers with $m \geq f(n)$.  Then we have
\[
\pinvequal ([1 + 2 + \cdots + m] - n; m-1) \geq 2 f(n)-2.
\]
\end{lemma}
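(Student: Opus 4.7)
The plan is to start from an arbitrary $A \subseteq \{0, 1, \ldots, m-1\}$ with $m - 1 \in A$ and $vol(A) = \frac{m(m+1)}{2} - n$, and to show $per(A^c) \geq 2f(n) - 2$. The key observation, which does most of the work when $m$ is large, is that $m \in \partial A^c$: this holds because $m - 1 \in A$ forces $m - 1 \notin A^c$, so $\{m-1, m+1\} \not\subseteq A^c$. This immediately gives $per(A^c) \geq m$ and settles the case $m \geq 2f(n) - 2$. Thus the real work reduces to the window $f(n) \leq m \leq 2f(n) - 3$, which in particular requires $f(n) \geq 3$.

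In that window, I will exhibit a second forced element of $\partial A^c$. A short calculation using Proposition \ref{fReps} (namely $f(n) < \sqrt{2n} + 1/2$) shows that $n \leq 2f(n) - 3$ would force $(n-2)^2 < 0$, so the volume constraint $\frac{m(m+1)}{2} - n \leq \frac{(m-1)m}{2}$ tightens to $n > m$, and $A$ is a proper subset of $\{0, 1, \ldots, m-1\}$. Let $m^* := \max(A^c \cap \{0, 1, \ldots, m-1\})$; since $m - 1 \in A$ gives $m^* \leq m - 2$, we have $m^* + 1 \in A$, and hence $m^* \in \partial A^c$. Therefore $per(A^c) \geq m + m^*$.

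The element $m^*$ is then bounded from below by a simple volume argument: $A$ contains the block $\{m^* + 1, \ldots, m-1\}$, which forces $\frac{m^*(m^* + 1)}{2} \geq n - m > 0$, and by the definition of $f$ this gives $m^* \geq f(n - m)$. So everything reduces to the purely arithmetic inequality $m + f(n - m) \geq 2f(n) - 2$ throughout $f(n) \leq m \leq 2f(n) - 3$.

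The main obstacle is this last inequality. Writing $m = f(n) + j$ with $0 \leq j \leq f(n) - 3$, it becomes $f(n - m) \geq f(n) - 2 - j$, which by the definition of $f$ translates to $n - m > \frac{(f(n) - 3 - j)(f(n) - 2 - j)}{2}$. The definitions of $f$ and $g$ give $n \geq \frac{(f(n) - 1) f(n)}{2} + 1$, so it suffices to check $\frac{(f(n) - 2)(f(n) - 1)}{2} - \frac{(f(n) - 3 - j)(f(n) - 2 - j)}{2} \geq j + 1$. But the left-hand side is a sum of $j + 1$ consecutive positive integers, each at least $f(n) - 2 - j \geq 1$, hence at least $j + 1$, closing the argument.
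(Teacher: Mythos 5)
Your proof is correct, and it shares the high-level structure of the paper's argument: both single out $m \in \partial A^c$, dispose of the case $m \geq 2f(n)-2$, and then in the window $f(n) \leq m \leq 2f(n)-3$ identify a second forced boundary element, namely the largest ``gap'' below $m-1$, and add it to $m$. Where you diverge is in how you bound that second element. The paper compares $vol(A)$ directly against the interval $\{f(n)-2, \ldots, m-1\}$ (using the fact that $m \leq 2f(n)-3 = (f(n)-2)+(f(n)-1)$) to conclude that some element of $\{f(n)-2,\ldots,m-2\}$ lies in $A^c$, which immediately gives the uniform lower bound $f(n)-2$ on the gap and finishes with $m + (f(n)-2) \geq 2f(n)-2$. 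You instead extract the best volume-based lower bound on the gap, namely $m^* \geq f(n-m)$, which depends on $m$, and then you close the argument with a separate arithmetic lemma $m + f(n-m) \geq 2f(n)-2$ verified by a triangular-number telescoping. You also need an extra (correct) detour to establish $n > m$ so that $m^*$ exists and $f(n-m)$ is defined, something the paper's argument gets for free since its volume comparison already produces an element of $A^c$. The trade-off: the paper's route is slicker and avoids the auxiliary inequality; yours is more mechanical but also more transparent about what the volume constraint actually forces, since $m^* \geq f(n-m)$ is the sharp consequence of $A \supseteq \{m^*+1,\ldots,m-1\}$.
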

\begin{proof}
First, we may assume $f(n) \geq 2$, because otherwise the claim is trivially true.  Now let $A$ be a subset of $\{0, 1, 2, \ldots, m-1\}$ such that $A$ has volume $[1+2+\cdots + m] -n$ and $m-1 \in A$.  By way of contradiction, suppose that $per(A^c) < 2f(n) -2$.
\paragraph*{}If $m \geq 2f(n) - 2$, then since $m-1 \in \partial A$, this would imply that $per(A^c) \geq m \geq 2f(n) -2$.  Therefore, we may assume that $m \leq 2f(n) - 3$.
\paragraph*{}Now since $m \geq f(n)$, the volume of $A$ may be written as
\begin{eqnarray*}
vol(A) &=& [1 + 2 + \cdots + m] - n = g(n) + [(f(n) + 1) + (f(n) + 2) + \cdots + m]\\
 &<& f(n) + [f(n) + 1] + \cdots + m,
\end{eqnarray*}
and because $m \leq 2f(n) -3 = [f(n) - 2] + [f(n) - 1]$, we also have
\[
vol(A) < [f(n)] + [f(n) + 1] + \cdots + [m - 1] + [f(n)-2] + [f(n) -1] = \sum_{i=f(n)-2}^{m-1} i.
\]
From this, we konw that there is at least one element of $\{f(n)-2, f(n)-1, \ldots , m-2\}$ that not contained in $A$, because otherwise the volume of $A$ would be too large.
\paragraph*{}Now let $l \in A^c$ be the largest integer satisfying $f(n)-2 \leq l \leq m-2$.  Then since $m-1 \in A$, we know that $l \in \partial A^c$, which implies
\[
per(A^c) \geq l + m \geq f(n) -2 + m \geq f(n) - 2 + f(n) = 2f(n) -2.
\]
But this contradicts the assumption that $per(A^c) < 2f(n) -2$, thus completing the proof.
\end{proof}
\paragraph*{}With these lemmas, we are now able to prove the following lower bounds.
\begin{theorem}\label{goodLowerBounds}
Let $P(n)$ and $Q(n)$ be as given.  Then for all $n \geq 2$, we have
\begin{eqnarray*}
P(n) &\geq & f(n) + \min \Big \{\pinv (g(n)), \sqrt{2 (g(n) + f(n)+1)} + 3/2, f(n)-2 \Big \} \qquad \qquad \text{and}\\
Q(n) &\geq & 1 + f(n) + \min \Big \{P (g(n)), \sqrt{2 (g(n) + f(n)+1)} + 1/2 \Big \}.
\end{eqnarray*}
\end{theorem}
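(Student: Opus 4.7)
The plan is to start from the direct recurrences \eqref{PGoodRec} and \eqref{QGoodRec} and split the minimization over $m \geq 1$ into three regions according to how $m$ compares with $f(n)$. These three cases exhaust the minimum, so collecting the resulting bounds and factoring $f(n)$ (or $1+f(n)$) out of each will deliver the two stated inequalities.

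When $m < f(n)$, the defining property of $f$ forces $[1+2+\cdots+m] < n$, so the first argument passed to $\phelp$, $\pinvhelp$, or $\pinvequal$ is negative, and the stated boundary conventions make the corresponding helper values equal $+\infty$. Such $m$ may simply be discarded from the minimum. When $m = f(n)$, the first argument becomes exactly $g(n)$. Using that $\phelp(\cdot\,;k)$ and $\pinvhelp(\cdot\,;k)$ are nonincreasing in $k$ and stabilize to $P$ and $\pinv$ respectively, I immediately obtain $f(n) + \pinvhelp(g(n); f(n)-2) \geq f(n) + \pinv(g(n))$ and $f(n) + \phelp(g(n); f(n)-1) \geq f(n) + P(g(n))$, which will supply the $\pinv(g(n))$ and $P(g(n))$ terms in the claimed minima. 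The remaining $\pinvequal$ term in the $P$ recurrence is handled by \textbf{Lemma \ref{piBound}}, which yields $\pinvequal(g(n); f(n)-1) \geq 2f(n)-2 = f(n) + (f(n)-2)$.

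When $m > f(n)$, \textbf{Lemma \ref{helperBound}} directly produces the analytic bounds $m + \pinvhelp([1+2+\cdots+m]-n; m-2) \geq f(n) + \sqrt{2(g(n)+f(n)+1)} + 3/2$ and $m + \phelp([1+2+\cdots+m]-n; m-1) \geq f(n) + \sqrt{2(g(n)+f(n)+1)} + 1/2$, while \textbf{Lemma \ref{piBound}} continues to apply (its hypothesis $m \geq f(n)$ is still met) giving again $\pinvequal \geq 2f(n)-2$, which is no smaller than the contribution already coming from $m = f(n)$. Taking the outer minimum across the three regions and factoring out $f(n)$ (respectively $1+f(n)$) then produces exactly the two asserted lower bounds.

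There is no real technical obstacle; the proof is essentially a careful assembly of the three preceding lemmas at the right values of $m$. The one mildly subtle observation is that the $\pinvequal$ contribution collapses to the $m$-independent bound $2f(n)-2$, which explains both the appearance of the $f(n)-2$ term in the lower bound for $P(n)$ and its absence in the lower bound for $Q(n)$, since no $\pinvequal$ term appears in the $Q$ recurrence at all.
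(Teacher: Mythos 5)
Your proposal is correct and follows essentially the same route as the paper: discard $m < f(n)$ via Lemma \ref{infinityBound}, isolate $m = f(n)$ (where the helper argument becomes $g(n)$ and the restricted minima dominate $\pinv(g(n))$ resp.\ $P(g(n))$), handle $m > f(n)$ with Lemma \ref{helperBound}, and bound the $\pinvequal$ terms uniformly by $2f(n)-2$ using Lemma \ref{piBound}. No gaps.
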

\begin{proof}
Starting with \eqref{PGoodRec} and applying \textbf{Lemmas \ref{infinityBound}, \ref{helperBound},} and \textbf{\ref{piBound}}, we obtain
\begin{eqnarray*}
P(n) &=& \min_{m \geq 1} \Big \{m + \pinvhelp ([1 + 2+\cdots +m] - n; m-2), \pinvequal ([1 + 2+\cdots +m] - n; m-1)  \Big \}\\
&=& \min_{m \geq f(n)} \Big \{m + \pinvhelp ([1 + 2+\cdots +m] - n; m-2), \pinvequal ([1 + 2+\cdots +m] - n; m-1)  \Big \}\\
&=& \min_{m > f(n)} \Big \{f(n) + \pinvhelp(g(n); f(n)-2), m + \pinvhelp ([1 + 2+\cdots +m] - n; m-2),\\
& & \qquad \qquad \pinvequal(g(n); f(n)-1), \pinvequal ([1 + 2+\cdots +m] - n; m-1)  \Big \}\\
&\geq& f(n) + \min \Big \{\pinv (g(n)), \sqrt{2 (g(n) + f(n)+1)} + 3/2, f(n)-2 \Big \}.
\end{eqnarray*}
The second inequality is proven analogously by starting with \eqref{QGoodRec}.
\end{proof}
\ignore{
Similarly, we also may obtain the following lower bound on $Q(n)$.
\begin{proposition}\label{goodLowerBoundForQ}
Let $n \geq 2$.  Then $Q(n)$ satisfies
\[
Q(n) \geq 1 + f(n) + \min \Big \{P (g(n)), \sqrt{2 (g(n) + f(n)+1)} + 1/2 \Big \}.
\]
\end{proposition}
\begin{proof}Just as before, starting from \eqref{QGoodRec}, for all $n \geq 2$ we have
\begin{eqnarray*}
\pinv (n) &=& 1 + \min_{m \geq 1} \Big \{m + \phelp ([1 + 2 + \cdots + m] - n; m-1) \Big \}\\
&=& 1 + \min_{m \geq f(n)} \Big \{m + \phelp ([1 + 2 + \cdots + m] - n; m-1) \Big \}\\
&\geq & 1 + \min_{m > f(n)} \Big \{f(n) + \phelp (g(n); f(n)-1), m + \phelp ([1 + 2 + \cdots + m] - n; m-1) \Big \}\\
&\geq & 1 + f(n) + \min_{m > f(n)} \Big \{P(g(n)), \sqrt{2 (g(n) + f(n)+1)} + 1/2 \Big \},
\end{eqnarray*}
as desired.
\end{proof}
}

\subsection*{Squeezing an Equation from Inequalities (Eventually)}
At this point, we have simple upper bounds on $P(n)$ and $Q(n)$ provided by \textbf{Theorem \ref{goodUpperBounds}}, and nearly simple lower bounds from \textbf{Theorem \ref{goodLowerBounds}}, which are complicated by the ``min" operators.  Suppse we could show that \textit{eventually} $P(g(n))$ and $Q(g(n))$ happen to be the smallest terms in each minimum.  Then our lower bounds would simplify drastically and our lower and upper bounds would squeeze together, yielding a simple pair of recursive equations that would hold for all sufficiently large $n$.
\paragraph*{}As luck would have it, we can in fact prove that eventually $P(g(n))$ and $Q(g(n))$ are the smallest terms in each minimum, as shown in the following proposition.
\begin{proposition}\label{eventuallyHappens}
Let $P(n)$ and $Q(n)$ be as given.  Then there exists $N \in \mathbb{Z}$ such that for all $n \geq N$
\begin{eqnarray*}
P(g(n)) &=& \min \Big \{P (g(n)), \sqrt{2 (g(n) + f(n)+1)} + 1/2 \Big \} \qquad \qquad \text{and}\\
Q(g(n)) &=& \min \Big \{\pinv (g(n)), \sqrt{2 (g(n) + f(n)+1)} + 3/2, f(n)-2 \Big \}.
\end{eqnarray*}
\end{proposition}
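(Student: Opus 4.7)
The plan is to observe that the proposition reduces to showing the following three inequalities for all $n$ sufficiently large:
\begin{align*}
P(g(n)) &\leq \sqrt{2(g(n) + f(n)+1)} + 1/2, \\
Q(g(n)) &\leq \sqrt{2(g(n) + f(n)+1)} + 3/2, \\
Q(g(n)) &\leq f(n) - 2,
\end{align*}
since each asserts that $P(g(n))$ or $Q(g(n))$ is the smallest term in the respective displayed minimum. All three will follow from the same kind of magnitude comparison using the asymptotics already established.

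The main input is \textbf{Theorem \ref{myBounds}}, which supplies $P(m), Q(m) \leq \sqrt{2m} + E(m)$ with $E(m) = \bigO{m^{1/4}\log_2\log_2 m}$ for $m > 2$ (the finitely many values $m \leq 8$ are handled directly by the observation $P(m), Q(m) \leq 7$, which is already noted in the proof of that theorem). For the first two inequalities, I would rationalize
\[
\sqrt{2(g(n) + f(n) + 1)} - \sqrt{2 g(n)} \;=\; \frac{2(f(n)+1)}{\sqrt{2(g(n) + f(n)+1)} + \sqrt{2 g(n)}},
\]
and use the trivial bound $g(n) \leq f(n) - 1$ to control the denominator by $\bigO{\sqrt{f(n)}}$, so that this gap is at least of order $\sqrt{f(n)}$. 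Since $E(g(n)) \leq E(f(n)-1) = \bigO{f(n)^{1/4} \log_2\log_2 f(n)}$, which is $\littleO(\sqrt{f(n)})$, the gap eventually dominates the error terms $E(g(n)) + 1/2$ and $E(g(n)) + 3/2$. For the third inequality I would note that $\sqrt{2 g(n)} + E(g(n)) \leq \sqrt{2(f(n)-1)} + E(f(n)-1)$, and that $f(n) - 2 - \sqrt{2(f(n)-1)}$ grows linearly in $f(n)$, so the correction term is again swallowed once $f(n)$ is large enough.

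The main obstacle is essentially bookkeeping: carrying the error estimate $E$ uniformly in $g(n)$ and choosing a single cutoff $N$ that simultaneously works for all three inequalities. One should also separately check the exceptional small cases $g(n) \in \{0,1,2\}$ where \textbf{Theorem \ref{myBounds}} does not directly apply; these are painless, because $P(g(n))$ and $Q(g(n))$ are bounded by fixed constants on that set, while the right-hand sides of all three inequalities tend to infinity with $n$.
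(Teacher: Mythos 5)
Your proposal is correct and follows essentially the same route as the paper: both reduce to the three one-sided inequalities, invoke the upper bound of \textbf{Theorem \ref{myBounds}}, exploit $g(n)\leq f(n)-1$ to show the extra $f(n)+1$ under the square root dominates the error term, and dispose of bounded $g(n)$ separately since the right-hand sides tend to infinity. The only difference is cosmetic: the paper absorbs the $\littleO(\sqrt{r})$ error by weakening the bound to $P(r)\leq\sqrt{4r}$ for $r\geq G$ and then just squares, whereas you keep the explicit error $E$ and rationalize the difference of square roots.
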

\begin{proof}
We will prove that there exists some value $N_P$ after which the first equation holds.  The proof that there also exists a value $N_Q$ after which the second equation holds is virtually identical.  Then taking $N = \max \{N_P , N_Q\}$ will complete the proof.
\paragraph*{}We need to show that eventually $P(g(n)) \leq \sqrt{2 (g(n) + f(n)+1)} + 1/2$.  From \textbf{Theorem \ref{myBounds}}, we know that
\[
P(r) \leq \sqrt{2r} + \littleO(\sqrt{r}).
\]
Therefore, there exists a constant $G$ such that for all $r \geq G$, we have
\[
P(r) \leq \sqrt{2r} + \littleO(\sqrt{r}) \leq \sqrt{4r}.
\]
From this, it follows that for all $n$, if $g(n) \geq G$, then we have
\[
P(g(n)) \leq \sqrt{4g(n)} \leq \sqrt{2(g(n) + f(n) + 1)} + 1/2.
\]
\paragraph*{}Let $M$ be the maximum value taken by $P(k)$ for $0 \leq k \leq G$, and let $n \geq M^2 (M^2+1)/2$ be arbitrary.  Now if $g(n) \geq G$, then we know the claim holds.  Therefore, we can assume $g(n) < G$.  But if this is that case, then we know $P(g(n)) \leq M$, which implies
\[
P(g(n)) \leq M \leq \sqrt{f(n)} \leq \sqrt{2(g(n) + f(n)+1)}+1/2.
\]
\paragraph*{}Therefore, for all $n \geq M^2 (M^2+1)/2$, the first equation holds.  Thus, by our previous remarks, this completes the proof.
\end{proof}
With this proposition, we are able to prove our main result.
\begin{theorem}\label{bestResult}
Let $P(n)$ and $Q(n)$ be as given.  Then there exists some integer $N$ such that for all $n\geq N$, 
\begin{eqnarray*}
P(n) &=& f(n) + Q(g(n)) \qquad \qquad \text{and}\\
Q(n) &=& 1 + f(n) + P(g(n)),
\end{eqnarray*}
where $f(n)$ and $g(n)$ are the previously defined functions.
\end{theorem}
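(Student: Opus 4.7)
The plan is to squeeze the desired equalities out of the upper and lower bounds we have already painstakingly established, using \textbf{Proposition \ref{eventuallyHappens}} to collapse the ``min'' in the lower bounds to a single clean term.

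First I would recall that \textbf{Theorem \ref{goodUpperBounds}} gives us, for every $n \geq 0$, the upper bounds
\[
P(n) \leq f(n) + Q(g(n)), \qquad Q(n) \leq 1 + f(n) + P(g(n)),
\]
so only matching lower bounds remain to be shown. From \textbf{Theorem \ref{goodLowerBounds}}, for all $n \geq 2$ we have
\[
P(n) \geq f(n) + \min \Big \{Q(g(n)), \sqrt{2(g(n) + f(n)+1)} + 3/2, \; f(n)-2 \Big \},
\]
and an analogous lower bound for $Q(n)$ involving $P(g(n))$. The claim now reduces to showing that, for all sufficiently large $n$, the first term $Q(g(n))$ (respectively $P(g(n))$) is the smallest of the three entries in each ``min'' expression.

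This is precisely the content of \textbf{Proposition \ref{eventuallyHappens}}: it supplies a threshold $N$ such that for all $n \geq N$,
\[
Q(g(n)) = \min \Big \{Q(g(n)), \sqrt{2(g(n)+f(n)+1)}+3/2, f(n)-2\Big\},
\]
and similarly $P(g(n))$ is the minimum in the other expression. Substituting these identities into the lower bounds from \textbf{Theorem \ref{goodLowerBounds}} yields
\[
P(n) \geq f(n) + Q(g(n)), \qquad Q(n) \geq 1 + f(n) + P(g(n)),
\]
valid for all $n \geq N$. Combined with the upper bounds above, we obtain the two desired equalities.

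There is essentially no hard step left here, since all of the combinatorial and analytic heavy lifting has been pushed into \textbf{Theorems \ref{goodUpperBounds}} and \textbf{\ref{goodLowerBounds}} and into \textbf{Proposition \ref{eventuallyHappens}}. The only subtlety I would double-check is that the $N$ from \textbf{Proposition \ref{eventuallyHappens}} is taken to be the maximum of the two thresholds $N_P$ and $N_Q$ appearing there (so that both simplifications happen simultaneously), and that $N \geq 2$ so that \textbf{Theorem \ref{goodLowerBounds}} applies. With those observations the proof is essentially one line of substitution followed by an appeal to the upper bound.
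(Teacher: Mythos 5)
Your proposal is correct and follows exactly the paper's argument: the upper bounds come from Theorem \ref{goodUpperBounds}, the lower bounds from Theorem \ref{goodLowerBounds}, and Proposition \ref{eventuallyHappens} collapses the minima so the two sets of bounds coincide. You have simply written out in detail the one-line substitution the paper leaves implicit.
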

\begin{proof}
This follows readily by using the previous proposition to simply the lower bounds in \textbf{Theorem \ref{goodLowerBounds}} and comparing these to the upper bounds in \textbf{Theorem \ref{goodUpperBounds}}.
\end{proof}

\subsection*{Corollaries and Remarks}
There are many interesting implications of \textbf{Theorem \ref{bestResult}}.  From this result, many things can be discovered about the behavior of $P(n)$ and $Q(n)$, and the intimate connection between these two functions is made evident.
\paragraph*{}Geometrically, we see that there is a certain fractal-like property of the graphs of these functions.  As before, let $Tr(t) = 0 + 1 + \cdots + t$ denote the $t^{\text{th}}$ triangular number.  Then for all sufficiently large $t$, the sequence $\{ P(Tr(t)), P(Tr(t)-1), P(Tr(t)-2), \ldots \}$ is equal to $\{t + Q(0), t + Q(1), t + Q(2), \ldots\}$.
\paragraph*{}In words, for all sufficiently large fixed values of $t$, the graph of $(Tr(t)-n, P(n) - t)$ coincides with the graph of $(n, Q(n))$ for all $0 \leq n \leq t-1$.  Thus, the graph of $P(n)$ eventually consists solely of partial copies of $Q(n)$.  Moreover, we see the graph of $Q(n)$ eventually consists solely of partial copies of $P(n)$.
\paragraph*{}This mutual similarity of the two functions also induces self-similarity as shown in the following results.
\begin{corollary}
For all sufficiently large values of $n$, if $g(n) \neq f(n)-1$, we have
\begin{eqnarray*}
P(n) &=& 1 + P(n-f(n)) \qquad \qquad \text{and}\\
Q(n) &=& 1 + Q(n-f(n)).
\end{eqnarray*}
\end{corollary}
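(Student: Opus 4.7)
The plan is to derive both identities directly from \textbf{Theorem \ref{bestResult}}, reducing the proof to a computation of how the functions $f$ and $g$ behave under the shift $n \mapsto n - f(n)$.

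First I would note that by \textbf{Theorem \ref{bestResult}}, there exists $N$ such that for all $m \geq N$ we have $P(m) = f(m) + Q(g(m))$ and $Q(m) = 1 + f(m) + P(g(m))$. Since $f(n) \leq \sqrt{2n} + 1/2$, the quantity $n - f(n)$ grows to infinity with $n$, so we may fix a threshold $N'$ large enough that both $n \geq N$ and $n - f(n) \geq N$ hold whenever $n \geq N'$. Applying \textbf{Theorem \ref{bestResult}} to both $n$ and $n - f(n)$, the two identities reduce to the claims
\[
f(n - f(n)) = f(n) - 1 \qquad \text{and} \qquad g(n - f(n)) = g(n),
\]
from which $P(n) - P(n - f(n)) = f(n) - (f(n)-1) = 1$, and likewise for $Q$.

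So the only real work is to verify these two identities about $f$ and $g$ under the assumption $g(n) \neq f(n) - 1$. Writing $n = \text{Tr}(f(n)) - g(n)$ with $\text{Tr}(k) = k(k+1)/2$, one computes
\[
n - f(n) = \text{Tr}(f(n)) - f(n) - g(n) = \text{Tr}(f(n) - 1) - g(n).
\]
The hypothesis $g(n) \neq f(n) - 1$ combined with $0 \leq g(n) < f(n)$ forces $0 \leq g(n) \leq f(n) - 2$, so I would check that
\[
\text{Tr}(f(n) - 2) < n - f(n) \leq \text{Tr}(f(n) - 1),
\]
where the lower inequality uses exactly $g(n) < f(n) - 1$ and the upper inequality uses $g(n) \geq 0$. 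By the uniqueness clause in the defining lemma for $f$ and $g$, this pins down $f(n - f(n)) = f(n) - 1$ and then $g(n - f(n)) = \text{Tr}(f(n)-1) - (n - f(n)) = g(n)$, as desired.

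The main obstacle, such as it is, is purely bookkeeping: making sure the threshold is chosen so that \textbf{Theorem \ref{bestResult}} applies to both $n$ and $n - f(n)$, and tracking the corner case $g(n) = f(n) - 1$ (excluded by hypothesis) under which $n - f(n) = \text{Tr}(f(n) - 2)$ and one instead gets $f(n - f(n)) = f(n) - 2$, $g(n - f(n)) = 0$, which would cause the identities to fail. Everything else is straightforward algebra with triangular numbers.
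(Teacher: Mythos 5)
Your proposal is correct and follows the same approach as the paper, which simply cites \textbf{Theorem \ref{bestResult}} together with the observation that $g(n) = g(n-f(n))$ when $g(n) \neq f(n)-1$. You flesh out the details that the paper leaves implicit — in particular the companion fact $f(n-f(n)) = f(n)-1$, which is equally necessary for the cancellation $f(n) - f(n-f(n)) = 1$ to work, and the bookkeeping that $n - f(n)$ is also eventually above the threshold $N$. Your triangular-number computation $n - f(n) = \mathrm{Tr}(f(n)-1) - g(n)$ and the uniqueness argument are exactly right, as is your identification of $g(n)=f(n)-1$ as the corner case where the representation drops to $(f(n)-2, 0)$ and the identities fail.
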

\begin{proof}
This follows from \textbf{Theorem \ref{bestResult}} and the fact that if $g(n) \neq f(n)-1$, then $g(n) = g(n-f(n))$.
\end{proof}

\begin{corollary}
For all values of $n$ such that $g(n)$ is sufficiently large, we have
\begin{eqnarray*}
P(n) &=& 1 + f(n) + f(g(n)) + P(g^2(n)) \qquad \qquad \text{and}\\
Q(n) &=& 1 + f(n) + f(g(n)) + Q(g^2(n)).
\end{eqnarray*}
\end{corollary}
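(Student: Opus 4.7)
The plan is to iterate \textbf{Theorem \ref{bestResult}} twice, nesting it inside itself. Let $N$ be the integer furnished by that theorem, so that for all $n \geq N$ we have $P(n) = f(n) + Q(g(n))$ and $Q(n) = 1 + f(n) + P(g(n))$.

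First I would make precise what ``$g(n)$ sufficiently large'' should mean: I take it to be $g(n) \geq N$, and verify that this single hypothesis also forces $n \geq N$, so that \textbf{Theorem \ref{bestResult}} applies at both levels of the nesting. Since $g(n) \leq f(n) - 1$ by definition of $f$ and $g$, the condition $g(n) \geq N$ implies $f(n) \geq N+1$, and hence $n \geq Tr(N) + 1 \geq N$ (for $N \geq 1$, which we may harmlessly enforce). Thus both $n$ and $g(n)$ lie above the threshold of \textbf{Theorem \ref{bestResult}}.

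To obtain the identity for $P(n)$, I apply the first identity of \textbf{Theorem \ref{bestResult}} at $n$ to get $P(n) = f(n) + Q(g(n))$, then apply the second identity of the same theorem at the argument $g(n)$ (which is legal by the preceding paragraph) to rewrite $Q(g(n)) = 1 + f(g(n)) + P(g(g(n))) = 1 + f(g(n)) + P(g^2(n))$. Substitution gives $P(n) = 1 + f(n) + f(g(n)) + P(g^2(n))$, as claimed. The identity for $Q(n)$ is symmetric: I apply the second identity at $n$ to get $Q(n) = 1 + f(n) + P(g(n))$, and then the first identity at $g(n)$ to rewrite $P(g(n)) = f(g(n)) + Q(g^2(n))$, yielding $Q(n) = 1 + f(n) + f(g(n)) + Q(g^2(n))$.

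There is no real obstacle: the corollary is a pure composition of the two recurrences of \textbf{Theorem \ref{bestResult}}. The only point that warrants a sentence of care is the trivial observation that bounding $g(n)$ below automatically bounds $n$ below, so that a single threshold controls both applications of the theorem.
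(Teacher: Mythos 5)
Your proposal is correct and matches the paper's proof, which simply applies \textbf{Theorem \ref{bestResult}} twice; your extra observation that $g(n) \geq N$ forces $n \geq N$ (via $g(n) \leq f(n)-1$) is a sensible elaboration of the same one-line argument.
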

\begin{proof}
This follows immediately by applying \textbf{Theorem \ref{bestResult}} twice.
\end{proof}

\paragraph*{}This last recurrence is readily `solved' yielding the following quasi-explicit equations.
\begin{proposition}\label{almostExplicit}
Let $N$ be as described in \textbf{Theorem \ref{bestResult}}, and let $n \geq 0$ be arbitrary.  Let $\phi(n;N) = \phi(n)$ denote the smallest nonnegative integer satisfying $g^{\phi(n)}(n) \leq N$.  Then we have
\begin{eqnarray*}
P(n) &=& \begin{cases} P(g^{\phi(n)}(n)) + \sum_{i=1}^{\phi(n)} f(g^{i-1}(n)) + \phi(n)/2 \qquad &\text{if $\phi(n)$ is even}\\ Q(g^{\phi(n)}(n)) + \sum_{i=1}^{\phi(n)} f(g^{i-1}(n)) +[\phi(n)-1]/2 \qquad &\text{if $\phi(n)$ is odd,}\end{cases} \qquad \qquad \text{and}\\
Q(n) &=& \begin{cases} Q(g^{\phi(n)}(n)) + \sum_{i=1}^{\phi(n)} f(g^{i-1}(n)) + \phi(n)/2 \qquad &\text{if $\phi(n)$ is even}\\ P(g^{\phi(n)}(n)) + \sum_{i=1}^{\phi(n)} f(g^{i-1}(n)) +[\phi(n)+1]/2 \qquad &\text{if $\phi(n)$ is odd.}\end{cases} 
\end{eqnarray*}
\end{proposition}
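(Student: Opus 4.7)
The plan is to prove the identity by induction on $\phi(n)$, using \textbf{Theorem \ref{bestResult}} at each step to strip off one layer of $P$/$Q$. The base case $\phi(n) = 0$ means $n \leq N$: the sum on the right is empty, the final additive term is $0$, and both equalities reduce to the tautologies $P(n) = P(n)$ and $Q(n) = Q(n)$, respectively.

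For the inductive step, suppose the proposition holds whenever $\phi(\cdot) = k$, and let $n$ satisfy $\phi(n) = k+1$. Then $n > N$, so \textbf{Theorem \ref{bestResult}} applies, and the trivial identity $g^{j}(g(n)) = g^{j+1}(n)$ gives $\phi(g(n)) = k$. Applying \textbf{Theorem \ref{bestResult}} yields $P(n) = f(n) + Q(g(n))$; now invoke the induction hypothesis on $Q(g(n))$ and split on the parity of $k$. If $k$ is even, the hypothesis expresses $Q(g(n))$ in terms of $Q(g^{k+1}(n))$, the sum $\sum_{i=1}^{k} f(g^{i}(n))$, and the constant $k/2$; adding the extra $f(n)$ absorbs into the sum as the $i = 0$ term, giving exactly the claimed formula for $P(n)$ with $\phi(n) = k+1$ odd (since $(\phi(n)-1)/2 = k/2$). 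If $k$ is odd, the hypothesis expresses $Q(g(n))$ in terms of $P(g^{k+1}(n))$, the same sum, and the constant $(k+1)/2$; absorbing $f(n)$ again gives the claimed formula for $P(n)$ with $\phi(n) = k+1$ even.

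The analogous argument for $Q(n)$ begins with $Q(n) = 1 + f(n) + P(g(n))$; the extra $+1$ contributed by the $Q$-recurrence (as opposed to the $P$-recurrence, which contributes $0$) is exactly what produces the accumulated $+1$'s in the quasi-explicit formula. In both parity cases the same absorption trick turns the inductive hypothesis at $g(n)$ into the target statement at $n$.

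Essentially all of the mathematical substance is already contained in \textbf{Theorem \ref{bestResult}}; this proposition is merely its unrolled form. The only potential obstacle is bookkeeping: one must verify that the parity of $\phi$ flips correctly at each application, that the index shift $g^{j}(g(n)) = g^{j+1}(n)$ reindexes the sum consistently, and that the additive $+1$'s are counted the correct number of times (once every two steps, incurred exactly when the outer function in the partial expansion is $Q$ rather than $P$). A single careful induction handles both functions and both parities uniformly, so no real difficulty should arise.
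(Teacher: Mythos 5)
Your induction on $\phi(n)$ is correct and is essentially the paper's argument made explicit: the paper disposes of this proposition by simply unrolling \textbf{Theorem \ref{bestResult}} (via the corollary that applies it twice), and your single-step induction with the parity and index bookkeeping checked is the same unrolling, just written out carefully. Nothing further is needed.
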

\begin{proof}
This follows easily from the previous corollary.  Although the function $\phi(n;N) = \phi(n)$ is much too elusive for most honest mathematicians to call these equations truly ``explicit", they ought not be considered recursive.  This is because even though $P$ and $Q$ are referenced on the right-hand side, their arguments are bounded; therefore, those terms are effectively known.
\end{proof}

\section{Conclusion} \label{section conclusion}
We conclude by discussing how large the value of $n$ must be for our results to hold and by listing some open questions.
\subsection*{``Sufficiently Large" and Computer Algorithms}
In \textbf{Proposition \ref{eventuallyHappens}}, we state results that hold for all sufficiently large values of $n$ without any discussion of how large ``sufficiently large" is.  Although this question can be partially answered by carefully disecting the arguments and bounds used, that approach alone is insufficient.
\paragraph*{}Suppose for instance, that this sort of reasoning reveals that our results hold for all $n \geq 3,000,000$.  Then that is perhaps interesting, but if it turns out that the results actually hold for all $n \geq 97$, then the first statement would seem absurdly high.  Thus, after analytically obtaining the first bound of say $3,000,000$, one would ideally check all values of $P(n)$ and $Q(n)$ for $n \leq 3,000,000$ in order to make the result as strong as possible, which brings up a brief discussion of algorithms.
\paragraph*{}The most na\"ive approach to compute $P(n)$ would be simply to list all sets of volume $n$ and find which has the smallest perimeter.  This would require roughly $\bigO{2^n}$ time and $\bigO{n}$ memory, which is much too slow for large $n$, and a different approach is needed\footnote{Trying to find a more efficient algorithm is actually what initially motivated the first few recurrences in this paper.}.
\paragraph*{}After deriving the recurrence relations for the auxilary functions such as \eqref{pHelpRec} and \eqref{qRec}, the author was able to take advantage of dynamic programming to design algorithms for computing $P(n)$ and $Q(n)$ taking $\bigO{n^2 f(n)} = \bigO{n^{2.5}}$ time and using $\bigO{n^2}$ memory.  By then employing a custom data structure, the author was able to reduce the memory requirement to roughly $\bigO{n}$.

\paragraph*{}Using these algorithms, the author was able to check all values of $P(n)$ and $Q(n)$ for $n \leq 3,500,000$.  In particular, the author checked to see for what values of $n$ the equations in \textbf{Theorem \ref{bestResult}} were valid.  The equations held for all values of $n \leq 3,500,000$ with the exception of a few hundred numbers, the largest of which was $149,894$.

\paragraph*{}Relying both on these computed values and on analytic bounds on $P(n)$ and $Q(n)$, the author very strongly believes that $149,894$ is the largest exception to the equations in \textbf{Theorem \ref{bestResult}}.  However, because this result depends so heavily on computers the author feels obliged to call this merely a ``very plausible conjecture."

\subsection*{Open Questions}
There are several possible areas of future research.  Because the function $P(n)$ was first introduced so recently, this paper serves as a comprehensive overview of all that is known.  The author is more than willing to provide anyone interested with his code and calculated results.
\begin{itemize}
\item[--] Little is known about the behavior of the functions $\phelp(n;k)$, $\pinvhelp(n;k)$, and $\pinvequal(n;k)$.
\item[--] It appears that for any fixed $n \leq 100,000$ the function $\phelp(n;k)$ takes at most two finite values as $k$ varies.  This may be interesting and might be proveable by focusing on \textbf{Proposition \ref{eventuallyHappens}}.
\item[--] The conjecture that $149,894$ is the largest exception to \textbf{Theorem \ref{bestResult}} needs a rigorous proof.
\item[--] In all liklihood, very little or nothing whatsoever is known about $\phi(n;N)$ from \textbf{Proposition \ref{almostExplicit}}.
\item[--] Characterizing sets for which $P(n)$ is obtained may be interesting.  It seems likely that the partitions used and the code developed in this paper would help with that.
\item[--] Providing more direct (i.e., less analytic) proofs for these results would likely be very enlightening.
\end{itemize}
\bibliography{mybib}
\end{document}